\documentclass[submission,copyright,creativecommons]{eptcs}
\providecommand{\event}{AiML 2026} 

\usepackage{aiml26}

\usepackage{iftex}
\usepackage{newtxtext,newtxmath}

\ifpdf
  \usepackage{underscore}         % Only needed if you use pdflatex.
  \usepackage[T1]{fontenc}        % Recommended with pdflatex
\else
  \usepackage{breakurl}           % Not needed if you use pdflatex only.
\fi

\usepackage{amsmath}
\usepackage{apxproof}
\usepackage[dvipsnames]{xcolor}
\usepackage{stmaryrd}
\usepackage{tikz-cd}
  \usetikzlibrary{arrows,arrows.meta}
  \tikzcdset{arrow style=tikz, diagrams={>=stealth'}}

\usepackage{mathrsfs}
\usepackage[cal=txupr,scr=boondoxupr,scrscaled=1.050]{mathalfa}

\newcommand{\mc}[1]{\mathcal{#1}}
\newcommand{\mb}[1]{\mathbb{#1}}
\newcommand{\mf}[1]{\mathfrak{#1}}

\newcommand{\mo}{\mathfrak}
\newcommand{\cat}{\mathtt}

\renewcommand{\iff}{\quad\text{iff}\quad}
\renewcommand{\phi}{\varphi}
\DeclareMathOperator{\Prop}{Prop}
\newcommand{\llb}{\llbracket}
\newcommand{\rrb}{\rrbracket}
\newcommand{\cl}[1]{\overline{#1}}
\DeclareMathOperator{\interior}{int}
\DeclareMathOperator{\Baire}{Baire}
\newcommand{\tbigvee}{{\textstyle\bigvee}}
\newcommand{\tbigwedge}{{\textstyle\bigwedge}}
\newcommand{\tbigcup}{{\textstyle\bigcup}}

\newcommand{\MML}{\mathsf{K_{\sigma}}}
\newcommand{\MLS}{\mathsf{MLS}}
\newcommand{\lan}{\mathcal{L_{\sigma}}}
\newcommand{\ts}[2]{\llbracket #1 \rrbracket^{\mathfrak{#2}}}
\newcommand{\mts}[2]{\llparenthesis #1 \rrparenthesis^{\mathfrak{#2}}}

\DeclareMathOperator{\clp}{Clp}
\DeclareMathOperator{\uf}{Uf}
\newcommand{\symdif}{\mathbin{\vartriangle}}

\newcommand{\sBA}{\cat{\sigma BA}}
\newcommand{\sStone}{\cat{\sigma Stone}}

\DeclareMathOperator{\op}{op}
\newcommand{\Ax}{\mathrm{Ax}}

\newcommand{\ddiamond}{\Diamonddot}

\newtheorem{problem}[theorem]{Problem}

\title{Modal Measurable Logics via a Modal Loomis-Sikorski Representation Theorem}
\author{%
  Nick Bezhanishvili
  \institute{University of Amsterdam\\ Amsterdam, The Netherlands}
  \email{n.bezhanishvili@uva.nl}
\and
  Jim de Groot\footnote{The second author was supported by Swiss National Science Foundation (SNSF) grant No.~200021\_215157.}
  \institute{University of Bern\\ Bern, Switzerland}
  \email{jim.degroot@unibe.ch}
\and
  Lawrence S.~Moss
  \institute{Indiana University\\ Bloomington, IN, USA}
  \email{lmoss@iu.edu}
}

\newcommand{\titlerunning}{Modal Measurable Logics}
\newcommand{\authorrunning}{N.~Bezhanishvili, J.~de Groot \& L.~Moss}

\hypersetup{
  bookmarksnumbered,
  pdftitle    = {\titlerunning},
  pdfauthor   = {\authorrunning},
  pdfsubject  = {EPTCS},               % Consider adding a more appropriate subject or description
  pdfkeywords = {keyword1, keyword2} % Uncomment and enter keywords specific to your paper
}

\begin{document}
\maketitle

\begin{abstract}
We investigate a modal extension of the infinitary classical logic with countable meets and joins, formulated with
an eye toward measure-theoretic work in dynamical systems and in point-free ergodic theory. We define a modal formalism in this language, which we call modal measurable logics. We also introduce a Kripke-like semantics for these logics in measurable spaces taking a designated modal $\sigma$-ideal into consideration. Using a restriction of J{\'o}nsson-Tarski duality and a modal extension of the Loomis-Sikorski theorem, we prove completeness of modal measurable logics with respect to this new semantics.
\end{abstract}

%%%%%%%%%%%%%%%%%%%%%%%%%%%%%%%%%%%%%%%%%%%%%%%%%%%%%%%%%%%%%%%%%%%%%%%%%%%%%%%%
\section{Introduction}

In (modal) logic, representation theorems are often used to bridge the gap between algebraic and geometric (frame-based) semantics. This allows one to transfer algebraic completeness of a logic to a completeness theorem for its frames.
For instance, the celebrated Stone representation theorem for Boolean algebras yields completeness of classical propositional logic~\cite{CZ97}.
Similarly, Priestley's representation theorem of distributive lattices yields completeness of positive logic with respect to its poset semantics~\cite{DP02},
and Esakia's representation of Heyting algebras~\cite{Esakia74, Esakia19} yields completeness of intuitionistic logic with respect to intuitionistic Kripke frames \cite{CZ97}.
Representation theorems for modal logics are often based on an existing representation for the underlying propositional logic,
such as the J\'onsson–Tarski representation of modal algebras which gives rise to completeness of modal logic with respect to Kripke semantics \cite{BdRV01, CZ97}.
In this paper we carry on with this tradition.

The aim of this paper is to develop a modal formalism for reasoning about relational structures based on measurable spaces. Alternatively, such structures can be viewed as Kripke frames equipped with a measurable space structure.
Specifically, we work with a modal language with countable meets and joins. Ideally, these should be interpreted as intersections and unions; we investigate to what extent this is feasible when aiming for a completeness result.

As a starting point, we consider the countably infinite analogue $\MML$ of the classical modal logic $\mathsf{K}$, obtained by extending the classical propositional logic with countable joins and meets with a unary modal operator $\Diamond$ that distributes over countable joins. We call the algebras corresponding to $\MML$ \emph{$\sigma$-modal algebras}.
These arise from extending $\sigma$-Boolean algebras with a countable-join-preserving operator.
Our first contribution is to prove a J\'onsson–Tarski representation theorem for $\sigma$-modal algebras, which yields order-topological completeness for $\sigma$-modal logics.

However, in the order-topological semantics of $\sigma$-modal logics, countable meets and joins are interpreted as the \emph{interiors} of intersections and the 
\emph{closures} of unions, respectively. While interesting, these structures do not form concrete $\sigma$-algebras in which countable meets and joins are interpreted as set-theoretic intersections and unions themselves.

To remedy this, we prove a modal version of the celebrated Loomis-Sikorski theorem, which was proven (independently) by Loomis~\cite{Loomis47} and Sikorski~\cite{Sikorski48} in the 1940s.
Recall that this theorem states that every ``abstract'' $\sigma$-Boolean algebra (that is, every Boolean algebra in which countable joins and meets exist) is a quotient of a ``concrete'' $\sigma$-algebra of sets 
(where the meets and joins are intersections and unions)
modulo a $\sigma$-ideal. This provides a concrete representation of $\sigma$-Boolean algebras.
Our modal Loomis-Sikorski theorem is one of the main technical contributions of the paper.

It turns out that to prove the modal Loomis-Sikorski theorem, 
we need to extend the logic with a special infinitary rule, which we call the \emph{Infinite Descending Chain} rule (IDC).
Similar rules are not uncommon in the realm of infinitary modal logics, and one
can for instance be found in~\cite{FurEA17}.
We call our logic \emph{Modal Loomis-Sikorski Logic} $\mathsf{MLS}$, and refer to the corresponding algebras as 
\emph{measurable $\sigma$-modal algebras}. We prove:

\begin{quote}
\textbf{Modal Loomis-Sikorski theorem:}
  Every measurable $\sigma$-modal algebra is the quotient of a ``concrete''  $\sigma$-modal algebra by a modal $\sigma$-ideal.
\end{quote}
Based on this result, we develop a ``measurable'' semantics for Modal Loomis-Sikorski Logic based on \emph{modal measurable spaces} with a modal $\sigma$-ideal $N$ of marked sets that we think of as ``designated null-sets''.
A modal measurable space consists of a measurable space $(X, \Sigma)$ and a relation $R$ on $X$ such that $\ddiamond$ (the modal operator corresponding to $R$) maps measurable sets to measurable sets.
Modal formulas are then interpreted in the modal algebra $(\Sigma, \ddiamond)$ quotiented by $N$.

Quotienting by $N$ seems unavoidable, both for technical reasons (in view of the Loomis–Sikorski theorem) and for conceptual ones. The ideal $N$ will typically consist of the null-sets of some probability measure, which is also the perspective adopted by Scott \cite{Scott09}. We note that a similar move was advocated by Lando \cite{Lando12Completeness, Lando12Dynamic, Lando17}, Fern{\'a}ndez-Duque \cite{FernandezDuque10} and 
Pavlov~\cite{Pavlov2020}. Indeed, \cite{Pavlov2020} states
that
``One can argue that an object of the right category of spaces in measure theory is not a set equipped with a $\sigma$-algebra of measurable sets, but rather a set $S$
 equipped with a $\sigma$-algebra $M$
 of measurable sets and a $\sigma$-ideal $N$
 of negligible sets, i.e., sets of measure 0.''
 Moreover, Jamneshan and Tao write in~\cite{JTao2023Foundational} 
that the quotient algebra should be viewed
``as a `point-free' or `pointless' abstraction of [a measurable space] $X$ in which the null-sets have been ‘deleted’.''
They further observe:
``This `point-free' approach to ergodic theory seems particularly well suited for studying actions of uncountable (discrete) groups, as by deleting the null-sets in advance, one can avoid to a large extent the standard difficulty that an uncountable union of null-sets is null.''
A similar approach  appears in G.~Bezhanishvili and Fern{\'a}ndez-Duque \cite{BF24}.

Using our modal version of the Loomis–Sikorski theorem, we prove the main completeness result of this paper:

\begin{quote}
\textbf{Completeness theorem:}
Every axiomatic extension of Modal Loomis-Sikorski Logic is sound and complete with respect to a class of marked modal measurable spaces.
\end{quote}
With our work, we hope to lay the foundation for a modal logic approach to measure-based dynamical systems  and (point-free) ergodic theory.
As a first attempt, we give an example of a modal measurable space used in the analysis of measure-preserving transformations in dynamical systems.

\paragraph{Related work}
One line of work connects modal logic in its probabilistic variant to reasoning about uncertainty in games. This includes early work on the formalisation of Harsanyi type spaces due to Heifetz and Samet~\cite{HeifetzSamet98} (where measure theory first appears
in a prominent way), and later papers which use  
a probabilistic modal logic, such as Moss and Viglizzo~\cite{MossViglizzo06}.
Neither of these papers take up questions of completeness (as we do).
Some papers which do obtain completeness 
include Goldblatt~\cite{Goldblatt10}
and Zhou~\cite{Zhou14}.  What all of the above papers have in common is that their modalities are quantitative: the sentences are of the form $B_p \phi$, where $0 \leq p \leq 1$, which can be read as saying that the probability of (or ``belief'' in) the sentence $\phi$ is at least $p$.
In the current paper, we investigate the use of modal operators to study a more dynamic intuition, rather than a quantitative one.

Another line of loosely related work concerns the combination of modal logic and topology, such as~\cite{BF24, Lando17,Scott09}. This type of work differs from ours because it often uses a topological interpretation of the modalities, following a long line of work on topological interpretations of modal logic, while we do not use the measurable space structure in this way. 

Yet another set of papers 
of potential relevance here are ones which call on dualities related to measure theory, such as~\cite{FurEA17,KozenLMP13}. 
The main difference with our work is the semantic formalism:
in~\cite{FurEA17,KozenLMP13} models are given by
\emph{Markov processes}. These are coalgebras of the probability measure functor, and they might be thought of
as probabilistic analogues of Kripke frames.   That is, they are structures where the accessibility relation comes with
probabilistic information
and the state set is ``mainly just a set.'' 
Our setting, on the other hand, considers a measurable space with an accessibility relation
that is ``mainly just a relation.''
(What we are saying here is not strictly correct: there are connections
between the set-theoretic and measure-theoretic aspects, both our paper and other work.) 
Now in moving closer to our ultimate goal, one would like both a measure-theoretic space and a probabilistic
transition relation.   We view the work in this paper as a necessary step towards this goal.

%%%%%%%%%%%%%%%%%%%%%%%%%%%%%%%%%%%%%%%%%%%%%%%%%%%%%%%%%%%%%%%%%%%%%%%%%%%%%%%%
\section{Stone duality and J\'onsson-Tarski duality}\label{sec:prelim}

  For the results in this section we refer to e.g.~\cite{BdRV01, CZ97, GivHal08}. A \emph{Stone space} is a compact Hausdorff space $(X, \tau)$
  whose clopen subsets form a basis for the topology~$\tau$. Write $\cat{Stone}$ for
  the category of Stone spaces and continuous functions,
  and $\cat{BA}$ for the category of Boolean algebras and (Boolean) homomorphisms.
  
  The collection $\clp(X, \tau)$ of clopen subsets of a Stone space forms
  a Boolean algebra. Moreover, if $f : (X, \tau) \to (Y, \sigma)$ is a
  continuous function between Stone spaces, then
  $f^{-1} : \clp(Y, \sigma) \to \clp(X, \tau)$ is a Boolean homomorphism.
  This gives rise to a contravariant functor
  \begin{equation*}
    \clp : \cat{Stone} \to \cat{BA}.
  \end{equation*}
  Conversely, every Boolean algebra $B$ gives rise to a Stone space
  $\uf(B) := (X_B, \tau_B)$ where $X_B$ is the set of ultrafilters of
  $B$ and $\tau_B$ is the topology on $X_B$ generated by the sets of the form $\theta(b) := \{ u \in X_B \mid b \in u \}$, where $b$ ranges over $B$.
  Furthermore, if $h : A \to B$ is a Boolean homomorphism,
  then $h^{-1} : (X_B, \tau_B) \to (X_A, \tau_A)$ is a continuous
  function. This yields a contravariant functor
  \begin{equation*}
    \uf : \cat{BA} \to \cat{Stone}.
  \end{equation*}

\begin{theorem}[Stone duality]
  $\clp$ and $\uf$ establish a dual equivalence
  $\cat{Stone} \equiv^{\op} \cat{BA}$.
\end{theorem}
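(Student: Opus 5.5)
To prove Stone duality, we construct natural isomorphisms $\theta_B : B \to \clp(\uf(B))$ for every Boolean algebra $B$ and $\eta_X : X \to \uf(\clp(X))$ for every Stone space $X$, and check that they are natural in $B$ and $X$. Functoriality of $\clp$ and $\uf$ is immediate, since preimages commute with composition and identities. The only thing to note is that $h^{-1}(u)$ is an ultrafilter whenever $u$ is one, and that $(h^{-1})^{-1}(\theta(a)) = \theta(h(a))$; the latter gives continuity of $h^{-1}$.

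\textbf{The algebra side.} Let $\theta : B \to \clp(X_B)$ be the map $b \mapsto \theta(b)$.
\begin{itemize}
\item It is a Boolean homomorphism because ultrafilters are prime and contain exactly one of $b$ and $\neg b$.
\item Each $\theta(b)$ is clopen, since its complement is $\theta(\neg b)$.
\item Injectivity follows from the ultrafilter lemma: if $a \not\le b$, extend the filter generated by $a \wedge \neg b$ to an ultrafilter.
\item The same lemma gives compactness of $X_B$. If $\{\theta(b_i)\}$ has the finite intersection property, then the $b_i$ generate a proper filter, and any ultrafilter extending it lies in every $\theta(b_i)$. Hausdorffness and zero-dimensionality are clear, because distinct ultrafilters are separated by some $\theta(b)$ and its complement.
\item For surjectivity, let $U$ be clopen. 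Then $U$ is a union of basic sets $\theta(b)$, and so is its complement. By compactness $U$ is a finite union of basic sets, hence equal to $\theta(b_1 \vee \dots \vee b_n)$.
\end{itemize}
Naturality, $h \circ \theta = \theta \circ h$ read through $h^{-1}$, is exactly the identity $(h^{-1})^{-1}(\theta(a)) = \theta(h(a))$ noted above.

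\textbf{The space side.} For a Stone space $X$, define $\eta(x) = \{U \in \clp(X) \mid x \in U\}$, which is clearly an ultrafilter of clopens.
\begin{itemize}
\item Injectivity uses that $X$ is Hausdorff with a clopen basis, so distinct points are separated by a clopen set.
\item Continuity holds because $\eta^{-1}(\theta(U)) = U$. This identity also shows that $\eta$ maps basic open sets onto basic open sets intersected with the image, so $\eta$ is a homeomorphism onto its image.
\item The main step is surjectivity. Given an ultrafilter $u$ of clopens, the family $u$ has the finite intersection property, so by compactness $\bigcap u \neq \emptyset$. If $x$ and $y$ were distinct points of this intersection, some clopen $U$ would separate them; but one of $U$ and $X \setminus U$ belongs to $u$, a contradiction. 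Hence $\bigcap u = \{x\}$ for a unique $x$, and maximality of $u$ gives $\eta(x) = u$.
\end{itemize}
Naturality is the equation $f^{-1}{}^{-1}(\eta(x)) = \eta(f(x))$, which is a direct unfolding of the definitions.

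The main obstacles are the two compactness arguments: compactness of $X_B$, and surjectivity of $\eta$. Both reduce to the ultrafilter lemma or the finite intersection property respectively, so I would prove these first and then assemble the rest as routine verification.
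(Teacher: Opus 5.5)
Your proof is correct. It is the standard argument: $\theta_B$ and $x \mapsto \{U \in \clp(X) \mid x \in U\}$ serve as the natural isomorphisms, the ultrafilter lemma gives injectivity of $\theta_B$ and compactness of $\uf(B)$, and compactness gives surjectivity of both maps. The paper gives no proof of its own here; it cites this classical result from standard references (Blackburn--de Rijke--Venema, Chagrov--Zakharyaschev, Givant--Halmos), where essentially your argument appears.
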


  A \emph{$\sigma$-Boolean algebra} is a Boolean algebra with countable meets
  and countable joins, and a $\sigma$-homomorphism is a homomorphism that
  preserves them. Write $\sBA$ for the category of $\sigma$-Boolean algebras
  and $\sigma$-homomorphisms.
  A Boolean algebra $B$ is a $\sigma$-Boolean algebra if and only if its
  dual Stone space $\uf(B)$ satisfies:
  \begin{equation}\label{eq:sStone}
    \text{if } \{ a_n \mid n \in \omega \}
    \text{ is a countable set of clopen sets, then }
    \cl{\tbigcup_{n \in \omega} a_n} \text{ is clopen.}
  \end{equation}
  (Here, if $Y$ is a subset of a topological space,
  then $\cl{Y}$ denotes its closure.)
  In Stone spaces where~\eqref{eq:sStone} holds, we can endow the set
  of clopens with a $\sigma$-BA structure: for a clopen $a$,
we take $- a$ to be the set-theoretic complement of $a$, and 
for
 each countable sequence $(a_n)_{n\in \omega}$ of clopens we define
 \begin{equation}\label{structure-clopens}
    \bigvee_{n\in \omega} a_n = \overline{\bigcup_{n\in \omega} a_n} \qquad \bigwedge_{n\in \omega} a_n = \interior\Big(\bigcap_{n \in \omega} a_n\Big) 
  \end{equation}
  
  Define a $\sigma$-continuous function between $\sigma$-Stone spaces to
  be a function whose dual is a $\sigma$-homomorphism, and write
  $\sStone$ for the category of $\sigma$-Stone spaces and $\sigma$-continuous functions. 
  Then~\cite[Section~39]{GivHal08}:

\begin{theorem}\label{thm:dual-sba}
  Stone duality restricts to a dual equivalence $\sStone \equiv^{\op} \sBA$.
\end{theorem}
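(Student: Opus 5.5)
The proof restricts the ordinary Stone duality $\clp \dashv \uf$ to full-on-objects subcategories and checks that objects and morphisms match up. The category $\sStone$ is defined so that morphisms are exactly the continuous maps whose dual is a $\sigma$-homomorphism. So once the objects correspond, the morphism part is essentially by definition. There are three steps.

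\emph{Step 1: objects.} We show that a Boolean algebra $B$ is $\sigma$-complete iff $\uf(B)$ satisfies~\eqref{eq:sStone}.

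For the forward direction, suppose $B$ is a $\sigma$-Boolean algebra and take clopens $a_n = \theta(b_n)$. Put $b = \bigvee_n b_n$. We claim $\theta(b) = \cl{\bigcup_n \theta(b_n)}$. The inclusion $\supseteq$ holds because $\theta(b)$ is closed and contains every $\theta(b_n)$. For $\subseteq$, let $u \in \theta(b)$ and suppose $u$ is not in the closure. Then some basic clopen $\theta(c) \ni u$ is disjoint from every $\theta(b_n)$. That means $c \wedge b_n = 0$ for all $n$, hence $c \le -b_n$ for all $n$, hence $c \le -b$. This contradicts $b, c \in u$.

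Conversely, if~\eqref{eq:sStone} holds, then $\cl{\bigcup_n \theta(b_n)} = \theta(b)$ for some $b$. This $\theta(b)$ is the least clopen containing all $\theta(b_n)$, because any clopen containing them is closed and so contains the closure. Since $\theta$ is an order isomorphism from $B$ onto the clopens, $b = \bigvee_n b_n$. Countable meets then exist by De Morgan, and under $\theta$ they become $\interior(\bigcap_n a_n)$, since $-\cl{\bigcup_n(-a_n)} = \interior\bigcap_n a_n$. This also confirms that~\eqref{structure-clopens} is exactly the transported $\sigma$-structure.

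\emph{Step 2: morphisms.} For $\sigma$-Stone spaces, a $\sigma$-continuous $f$ has $f^{-1} : \clp(Y) \to \clp(X)$ a $\sigma$-homomorphism by definition. So $\clp$ restricts to a functor $\sStone \to \sBA$. Conversely, let $h : A \to B$ be a $\sigma$-homomorphism. Under the natural isomorphisms $\theta_A, \theta_B$, the map $\clp(\uf(h))$ is identified with $h$. It is therefore a $\sigma$-homomorphism, which means $\uf(h) = h^{-1}$ is $\sigma$-continuous.

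\emph{Step 3: the equivalence.} The unit $B \cong \clp\uf(B)$ and the counit $X \cong \uf\clp(X)$ of Stone duality are isomorphisms. An isomorphism of Boolean algebras automatically preserves all existing joins and meets, so the unit components are $\sigma$-isomorphisms. Likewise the homeomorphisms $X \cong \uf\clp(X)$ have duals that are Boolean isomorphisms, hence $\sigma$-isomorphisms, so they are isomorphisms in $\sStone$. Naturality is inherited from Stone duality.

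The main point requiring care is Step 1, namely identifying the abstract countable join with the closure of the union. The rest is bookkeeping forced by the definition of $\sigma$-continuity.
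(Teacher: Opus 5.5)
Your proof is correct and follows the route the paper takes implicitly. The paper gives no argument beyond citing \cite[Section~39]{GivHal08} and asserting without proof that $B$ is a $\sigma$-Boolean algebra iff $\uf(B)$ satisfies~\eqref{eq:sStone}, and it defines $\sigma$-continuity precisely so that the morphism part is immediate; your Step~1 supplies that object-level characterisation (including the check that~\eqref{structure-clopens} is the actual lattice structure on the clopens), and Steps~2--3 are exactly the bookkeeping that definition leaves.
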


Stone duality for Boolean algebras can be extended to a duality for
  modal algebras. By a \emph{modal algebra} we mean a pair $(B, \Diamond)$
  consisting of a Boolean algebra $B$ and a unary operator $\Diamond$
  on $B$ that preserves finite joins.  (Here and always we include the empty join,
  with $\bigvee \emptyset = \bot$.) With the expected notion of homomorphism,
  these form the category $\cat{MA}$ of modal algebras.

  Dually, we work with \emph{modal spaces}.
  These are structures of the form $(X, \tau, R)$ such that $(X, \tau)$
  is a Stone space, $R$ is a point-closed relation (i.e.~$R[x]$ is closed in $(X, \tau)$ for each $x \in X$), and
  $\ddiamond a := \{ x \in X \mid R[x] \cap a \neq \emptyset \}$
  is clopen whenever $a$ is clopen.
  When the topological structure is viewed as a collection of
  admissible sets, modal spaces are often referred to as 
  \emph{descriptive (general) frames}~\cite[Section~5.5]{BdRV01}.
  
  With bounded continuous morphisms, modal spaces comprise the category $\cat{MSpace}$.
  The functor $\clp$ can be extended to a functor
  \begin{equation*}
    \clp : \cat{MSpace} \to \cat{MA}
  \end{equation*}
  by sending a modal space $(X, \tau, R)$ to the modal algebra $(\clp(X, \tau), \ddiamond)$.
  Conversely, if $(B, \Diamond)$ is a modal algebra then
  $(X_B, \tau_B, R_{\Diamond})$ is a modal space, where
  $u R_{\Diamond} v$ iff ($b \in v$ implies $\Diamond b \in u$ for all $b \in B$).
  With the same action on morphisms, the functor $\uf$ extends to
  \begin{equation*}
    \uf : \cat{MA} \to \cat{MSpace}.
  \end{equation*}

\begin{theorem}[J{\'o}nsson-Tarski duality]
  $\clp$ and $\uf$ establish a dual equivalence $\cat{MSpace} \equiv^{\op} \cat{MA}$.
\end{theorem}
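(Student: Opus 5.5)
The plan is to build on Stone duality, which is taken as given. The work has two layers: showing that the two extended functors are well defined, and showing that the unit and counit isomorphisms of Stone duality are compatible with the modal structure. I then check that these isomorphisms are morphisms in $\cat{MA}$ and $\cat{MSpace}$ respectively.

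\emph{Well-definedness.} Let $(X,\tau,R)$ be a modal space. Then $\ddiamond(a\cup b)=\ddiamond a\cup\ddiamond b$ and $\ddiamond\emptyset=\emptyset$, so $(\clp(X,\tau),\ddiamond)$ is a modal algebra. Let $f$ be a bounded continuous morphism, meaning $xRy$ implies $f(x)Rf(y)$, and $f(x)Ry'$ implies that $xRy$ for some $y$ with $f(y)=y'$. These two conditions give exactly $f^{-1}(\ddiamond a)=\ddiamond f^{-1}(a)$.

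Conversely, let $(B,\Diamond)$ be a modal algebra. The first step is to show that $\ddiamond\theta(b)=\theta(\Diamond b)$.
\begin{itemize}
\item The inclusion $\subseteq$ is immediate from the definition of $R_\Diamond$.
\item For $\supseteq$, suppose $\Diamond b\in u$. The set $\{b\}\cup\{c : \Diamond c\notin u\}$ has the finite intersection property. Indeed, $\{c:\Diamond c\notin u\}$ is closed under finite joins and downward closed, since $\Diamond$ is monotone and preserves finite joins, and $c\ge b$ would force $\Diamond c\in u$. Equivalently, $\{-c : \Diamond c\notin u\}$ is a filter that meets $b$ nontrivially. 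By the ultrafilter lemma there is an ultrafilter $v\ni b$ avoiding every $c$ with $\Diamond c\notin u$. Then $uR_\Diamond v$.
\end{itemize}
This prime-filter style existence argument is the main technical step.

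Next, $R_\Diamond[u]=\bigcap\{\theta(b) : \Diamond(-b)\notin u\}$ is an intersection of clopens, hence closed. Finally, for a modal homomorphism $h$, one checks that $h^{-1}$ is bounded. The forth condition is straightforward. The back condition again uses the ultrafilter extension argument above, applied to the filter generated by $h[v']$ together with the complements $-h(c)$ for $\Diamond c\notin u$.

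\emph{Natural isomorphisms.} The Stone isomorphism $\theta:B\to\clp(\uf B)$ now commutes with $\Diamond$ by the identity $\ddiamond\theta(b)=\theta(\Diamond b)$ above, so it is a modal algebra isomorphism. For a modal space $X$, the map $\eta:X\to\uf\clp X$, $x\mapsto\{a : x\in a\}$, is a homeomorphism by Stone duality. It remains to show that $xRy$ iff $\eta(x)R_{\ddiamond}\eta(y)$.
\begin{itemize}
\item Forward: if $xRy$ and $y\in a$, then $x\in\ddiamond a$.
\item Backward: suppose $(x,y)\notin R$. Since $R[x]$ is closed and $X$ is a Stone space, there is a clopen $a\ni y$ disjoint from $R[x]$. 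Then $x\notin\ddiamond a$, so the relation fails.
\end{itemize}
This is exactly where point-closedness of $R$ is used.

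Naturality follows from naturality in Stone duality, since the underlying maps are unchanged. Together these give the dual equivalence.
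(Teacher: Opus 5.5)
Your proof is correct and is essentially the standard argument that the paper relies on. The paper states J\'onsson--Tarski duality without proof and defers to the standard references. Like that standard argument, you lift Stone duality via $\ddiamond\theta(b)=\theta(\Diamond b)$, use the ultrafilter-extension argument for the back condition, and use point-closedness of $R$ to show that $\eta$ reflects the relation.

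One small slip: the family with the finite intersection property should be $\{b\}\cup\{-c : \Diamond c\notin u\}$, not $\{b\}\cup\{c : \Diamond c\notin u\}$. Your next sentence already states the correct version.
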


  In particular, we note that the map $\theta_B : (B, \Diamond) \to (\clp(X_B, \tau_B), \ddiamond)$, given by $\theta_B(b) := \{ u \in X_B \mid b \in u \}$, is an isomorphism for every modal algebra $(B, \Diamond)$.

%%%%%%%%%%%%%%%%%%%%%%%%%%%%%%%%%%%%%%%%%%%%%%%%%%%%%%%%%%%%%%%%%%%%%%%%%%%%%%%%
\section{The $\sigma$-modal logic and the modal Loomis-Sikorski logic}

  The obvious ``$\sigma$-analogue'' of normal modal logic is the extension
  with countable meets and joins and a diamond that distributes over
  countable joins. This yields what we call \emph{$\sigma$-modal logic}.
  However, in order to prove a modal analogue of the Loomis-Sikorski theorem,
  and obtain completeness with respect to modal measurable spaces modulo
  an ideal of null-sets, we need to stipulate an additional rule.
  Adding this to $\sigma$-modal logic yields what we call
  \emph{Modal Loomis-Sikorski logic}.

\begin{definition}
  The \emph{modal measurable language} is  the collection
  $\lan$ of formulas defined by the grammar
  \begin{equation*}
    \phi ::= p 
        \mid \neg\phi
        \mid \bigvee_{i \in \omega} \phi_i
        \mid \Diamond \phi
  \end{equation*}
  where $p$ ranges over a countably infinite set $\Prop$ of proposition letters.
  We abbreviate $\phi \vee \psi := \bigvee(\phi, \psi, \psi, \psi, \ldots)$
  and $\bot := p \vee \neg p$, and define $\top$, $\to$, $\bigwedge$, $\wedge$
  and $\Box$ as expected.
\end{definition}

\begin{definition}\label{def:logic}
Let $\Ax \subseteq \lan$ be a set of axioms.
The \emph{$\sigma$-modal logic} $\MML \oplus \Ax$ is the least set of $\lan$-formulas derivable from the following axioms and rules: 
\begin{itemize}
  \item All substitution instances of classical tautologies (allowing for countable conjunction and disjunction) and all substitution instances of formulas in $\Ax$
  (this includes the infinite De Morgan laws);
  \item The join rule: for any countable sequence $(\phi_n)_{n \in \omega}$ and formula $\psi$:
        \begin{equation*}
          \dfrac{\phi_n \to \psi \qquad n \in \omega}
                {(\tbigvee_{n \in \omega} \phi_n) \to \psi}
        \end{equation*}
  \item The modal axiom and rule:
        \begin{equation*}
           \Diamond(\tbigvee_{n \in \omega} \phi_n) \leftrightarrow \tbigvee_{n \in \omega}\Diamond\phi_n 
           \qquad
          \dfrac{\neg\phi}{\neg\Diamond\phi}
        \end{equation*}
  \item Modus ponens:
        \begin{equation*}
          \dfrac{\phi \qquad \phi \to \psi}{\psi}
        \end{equation*}
\end{itemize}
\emph{Modal Loomis-Sikorski logic} $\MLS \oplus \Ax$ is the set of $\lan$-formulas
derivable from the axioms and rules above together with the
\emph{infinite descending chain rule (IDC)}:
\begin{equation}\label{eq:inf-diamond} \tag{IDC}
  \dfrac{\neg\tbigwedge_{n \in \omega} \phi_n
         \qquad \phi_{n+1} \to \phi_n \text{ for all } n \in \omega}
        {\neg\tbigwedge_{n \in \omega} \Diamond\phi_n}.
\end{equation}
A formula $\phi$ is called \emph{derivable} in $\MML$ or $\MLS$ if it can be obtained from the axioms and rules of the respective logics.
If $\phi$ is derivable in $\MML$ or $\MLS$ then we write
$\vdash_{\MML} \phi$ or $\vdash_{\MLS} \phi$, respectively.
\end{definition}

  In either logic, we get $\phi_k \to \bigvee_{n \in \omega} \phi_n$
  as a substitution instance of a classical tautology, as well as the following dual axioms and rules:
  \begin{equation*}
    (\tbigwedge_{n \in \omega} \phi_n) \to \phi_k \quad (k \in \omega),
    \qquad
    \Box(\tbigwedge_{n \in \omega} \phi_n) \leftrightarrow \tbigwedge_{n \in \omega} \Box\phi_n,
    \qquad
    \dfrac{\psi \to \phi_n \quad n \in \omega}
          {\psi \to (\tbigwedge_{n \in \omega}\phi_n)},
    \qquad
    \dfrac{\phi}{\Box\phi}.
  \end{equation*}
  Although the use of diamond as a primitive connective provided intuition for some of the proofs,
  using boxes as primitives may yield more natural axioms and rules.

%%%%%%%%%%%%%%%%%%%%%%%%%%%%%%%%%%%%%%%%%%%%%%%%%%%%%%%%%%%%%%%%%%%%%%%%%%%%%%%%
\section{Algebraic semantics for modal $\sigma$-logics}\label{sec:alg}

  Next, we present the algebraic semantics of $\MML$ and $\MLS$.
  Recall that \emph{a $\sigma$-Boolean algebra} is a Boolean algebra that has
  countable meets and countable joins. We extend this to include
  an additional operator,~$\Diamond$.

\begin{definition}
  A \emph{$\sigma$-modal algebra} is a pair $(B, \Diamond)$ consisting of a
  $\sigma$-Boolean algebra $B$ and a unary operator $\Diamond : B \to B$
  that satisfies
  \begin{equation}\label{eq:sMA}
    \Diamond\bot = \bot
    \quad\text{and}\quad
    \bigvee_{n \in \omega} \Diamond b_n = \Diamond \bigvee_{n \in \omega} b_n
  \end{equation}
  for any countable sequence $(b_n)_{n \in \omega}$ of elements in $B$.

  A \emph{measurable $\sigma$-modal algebra} is a
  $\sigma$-modal algebra $(B, \Diamond)$ that additionally satisfies
   the following condition:  
for
  every countable descending chain $(b_n)_{n \in \omega}$
  (i.e.~every countable collection of elements in $B$ such that
  $b_0 \geq b_1 \geq b_2 \geq \cdots$),
  \begin{equation}\label{eq:good}
    \bigwedge_{n \in \omega} b_n = \bot
    \quad\text{implies}\quad
    \bigwedge_{n \in \omega} \Diamond b_n = \bot.
  \end{equation}
  This rule is motivated by the desire for a modal Loomis-Sikorski theorem:
  it dually corresponds to $\ddiamond$ sending Baire null-sets to
  Baire null-sets (Theorem~\ref{thm:JT-restriction}), which is a property used in Lemma~\ref{lemma-immediate} en route to proving the
  Loomis-Sikorski theorem (Theorem~\ref{thm:modal-LS}).
  
  A formula $\phi$ is \emph{valid} in a $\sigma$-modal algebra $(B, \Diamond)$,
  notation: $(B, \Diamond) \Vdash \phi$,
  if it evaluates to $\top$ under every assignment of elements in $B$
  to the proposition
  letters.
  We write $\mathsf{\sigma MA} \Vdash \phi$ if $\phi$ is valid in every $\sigma$-modal algebra,
  and $\mathsf{MMA} \Vdash \phi$ if $\phi$ is valid in every measurable $\sigma$-modal algebra.
  A set $\Ax \subseteq \lan$ of axioms is valid in a $\sigma$-modal algebra
  $(B, \Diamond)$ if all formulas in $\Ax$ are valid.
\end{definition}

\begin{remark}
The condition in~\eqref{eq:good} which we require in
a measurable $\sigma$-modal algebra may be reformulated
to emphasize a connection to compactness.
For all sequences $(b_n)$, not necessarily descending,
\[
 \bigwedge_{\mbox{\scriptsize finite }F \subseteq \omega} \Diamond  \bigwedge_{n\in F} b_n > \bot
    \quad\text{implies}\quad 
    \bigwedge_{n \in \omega} b_n > \bot
\]
It is also possible to reformulate the (IDC) rule in the
logic in a parallel way.
\end{remark}

  Recall that a \emph{$\sigma$-ideal} of a $\sigma$-Boolean algebra $B$
  is a nonempty set closed downward in the order and also under countable joins.
  We call two elements $a, b \in B$ \emph{equivalent modulo $I$},
  and write $a \equiv_I b$,
  if their symmetric difference $a \symdif b := (a \wedge \neg b) \vee (b \wedge \neg a)$ is in $I$, and denote the equivalence class of $b$ by $[b]$.
  Then we can equip $B/I = \{ [b] \mid b \in B \}$ with a $\sigma$-Boolean
  algebra structure by defining $\bot^I = [\bot]$, $\neg^I[b] = [\neg b]$
  and $\bigvee_{n \in \omega}^I [b_n] = [\bigvee_{n \in \omega} b_n]$.
  This extends to $\sigma$-modal algebras as follows.

\begin{definition}
  Let $(B, \Diamond)$ be a $\sigma$-modal algebra.
  A \emph{modal $\sigma$-ideal} is a $\sigma$-ideal $I$ of $B$ such that
  $b \in I$ implies $\Diamond b \in I$ for all $b \in B$.
  We define the \emph{quotient} of $(B, \Diamond)$ by $I$ by
  \begin{equation*}
    (B, \Diamond)/I := (B/I, \Diamond^I)
  \end{equation*}
  where $\Diamond^I[b] = [\Diamond b]$.
\end{definition}

  The next lemma allows us to show that the definition of $\Diamond^I[b]$
  is well defined.

\begin{lemma}\label{lem:msa-quotient}
  Let $(B, \Diamond)$ be a $\sigma$-modal algebra and $I \subseteq B$
  a modal $\sigma$-ideal. Then $a \equiv_I b$ implies $\Diamond a \equiv_I \Diamond b$ for all $a, b \in B$.
\end{lemma}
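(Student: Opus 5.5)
The plan is to use only two facts: $\Diamond$ preserves finite joins, and hence is monotone; and $I$ is a downset closed under finite joins and under $\Diamond$. Assume $a \equiv_I b$, so $a \symdif b \in I$. Since $a \wedge \neg b \leq a \symdif b$ and $I$ is downward closed, both $a \wedge \neg b$ and $b \wedge \neg a$ lie in $I$. Closure of $I$ under $\Diamond$ then gives $\Diamond(a \wedge \neg b) \in I$ and $\Diamond(b \wedge \neg a) \in I$.

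The main step is the inequality
\[
  \Diamond a \wedge \neg \Diamond b \;\leq\; \Diamond(a \wedge \neg b).
\]
To prove it, write $a = (a \wedge b) \vee (a \wedge \neg b)$. Applying \eqref{eq:sMA} to the countable sequence $(a\wedge b, a\wedge\neg b, \bot, \bot, \ldots)$, and using $\Diamond\bot=\bot$, gives
\[
  \Diamond a = \Diamond(a\wedge b) \vee \Diamond(a \wedge \neg b).
\]
Monotonicity of $\Diamond$ follows the same way: if $x \leq y$ then $\Diamond y = \Diamond(x \vee y) = \Diamond x \vee \Diamond y$. Hence $\Diamond(a \wedge b) \leq \Diamond b$, and so $\Diamond a \leq \Diamond b \vee \Diamond(a \wedge \neg b)$. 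Meeting both sides with $\neg\Diamond b$ yields the claimed inequality.

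By symmetry, $\Diamond b \wedge \neg\Diamond a \leq \Diamond(b \wedge \neg a)$. Therefore
\[
  \Diamond a \symdif \Diamond b \leq \Diamond(a\wedge\neg b) \vee \Diamond(b \wedge \neg a).
\]
The right-hand side is in $I$, since a $\sigma$-ideal is closed under (countable, hence finite) joins. The left-hand side is then in $I$ by downward closure, so $\Diamond a \equiv_I \Diamond b$.

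None of these steps is a real obstacle. The only point needing care is deriving finite join preservation and monotonicity from the countable condition \eqref{eq:sMA} together with $\Diamond\bot=\bot$, by padding with $\bot$. After that, the argument is the standard one for modal ideals in ordinary modal algebras.
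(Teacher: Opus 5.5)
Your proof is correct and follows essentially the same route as the paper: both prove $\Diamond a \wedge \neg\Diamond b \leq \Diamond(a \wedge \neg b)$ and its symmetric counterpart. The paper obtains this via $\Diamond a \leq \Diamond(a \vee b) = \Diamond(a \wedge \neg b) \vee \Diamond b$ rather than by splitting $a$, and it bounds $\Diamond a \symdif \Diamond b$ by the single element $\Diamond(a \symdif b) \in I$ instead of placing the two pieces in $I$ separately and using closure under finite joins; these differences are only cosmetic.
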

\begin{proof}
  First, we note
  \begin{equation*}
    \Diamond a
      \leq \Diamond(a \vee b)
      = \Diamond((a \vee b) \wedge (\neg b \vee b))
      = \Diamond((a \wedge \neg b) \vee b)
      = \Diamond(a \wedge \neg b) \vee \Diamond b.
  \end{equation*}
  This implies $\Diamond a \wedge \neg\Diamond b \leq \Diamond(a \wedge \neg b)$.
  Similarly, we can derive $\Diamond b \wedge \neg\Diamond a \leq \Diamond(b \wedge \neg a)$. Therefore, 
  \begin{equation*}
    (\Diamond a) \symdif (\Diamond b)
      = (\Diamond a \wedge \neg\Diamond b) \vee (\Diamond b \wedge \neg\Diamond a)
      \leq \Diamond(a \wedge \neg b) \vee \Diamond(b \wedge \neg a)
      = \Diamond(a \symdif b).
  \end{equation*}
  Now if $a \equiv_I b$ then $a \symdif b \in I$ so by definition
  $\Diamond(a \symdif b) \in I$. Since $I$ is downward closed,
  we find $(\Diamond a) \symdif (\Diamond b) \in I$, as desired.
\end{proof}

\begin{proposition}\label{prop:msa-quotient}
  If $(B, \Diamond)$ is a $\sigma$-modal algebra and $I \subseteq B$ a
  modal $\sigma$-ideal, then $(B, \Diamond)/I$ is a $\sigma$-modal algebra again.
\end{proposition}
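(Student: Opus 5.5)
The plan is to take the $\sigma$-Boolean structure on $B/I$ for granted, as recalled above: it is a $\sigma$-Boolean algebra with $\bigvee^I_{n} [b_n] = [\bigvee_n b_n]$ and $\neg^I[b] = [\neg b]$. What remains is to show that $\Diamond^I$ is a well-defined operator on $B/I$ and that it satisfies the two conditions in~\eqref{eq:sMA}.

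\textbf{Well-definedness.} This is exactly Lemma~\ref{lem:msa-quotient}. If $[a] = [b]$, then $a \equiv_I b$, hence $\Diamond a \equiv_I \Diamond b$, and so $[\Diamond a] = [\Diamond b]$. Thus setting $\Diamond^I[b] := [\Diamond b]$ does not depend on the choice of representative.

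\textbf{The equations in~\eqref{eq:sMA}.} Both are transferred from $B$ through the canonical map $b \mapsto [b]$.
\begin{itemize}
\item For the bottom element, $\Diamond^I \bot^I = \Diamond^I[\bot] = [\Diamond\bot] = [\bot] = \bot^I$, using $\Diamond\bot = \bot$ in $B$.
\item For countable joins, given a sequence $([b_n])_{n\in\omega}$ in $B/I$, compute
\begin{equation*}
\Diamond^I \tbigvee^I_{n} [b_n] = \Diamond^I\Big[\tbigvee_{n} b_n\Big] = \Big[\Diamond\tbigvee_{n} b_n\Big] = \Big[\tbigvee_{n} \Diamond b_n\Big] = \tbigvee^I_{n} [\Diamond b_n] = \tbigvee^I_{n} \Diamond^I[b_n].
\end{equation*}
\end{itemize}

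The only delicate point is that the first and fourth equalities in the display rely on countable joins in $B/I$ being computed on representatives. This holds because $I$ is a $\sigma$-ideal, so the quotient map is a $\sigma$-homomorphism. That fact is part of the standard quotient construction already recalled, so I expect no real obstacle beyond invoking Lemma~\ref{lem:msa-quotient}.
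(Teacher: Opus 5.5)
Your proposal is correct and follows essentially the same approach as the paper: well-definedness of $\Diamond^I$ comes from Lemma~\ref{lem:msa-quotient}, and the equations in~\eqref{eq:sMA} are transferred through the quotient map. The paper's proof simply says the equations follow ``immediately from the definitions,'' and your computation on representatives is exactly the verification it leaves implicit.
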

\begin{proof}
  We know from Lemma~\ref{lem:msa-quotient} that
  $a \equiv_I b$ implies $\Diamond a \equiv_I \Diamond b$,
  so $\Diamond^I$ is well defined.
  Additionally, it follows immediately from the definitions that
  the equations in~\eqref{eq:sMA} are satisfied, showing that 
  $(B, \Diamond)/I$ is a $\sigma$-modal algebra.
\end{proof}

  Recall that a formula $\phi$ is valid on a $\sigma$-modal algebra $(B, \Diamond)$
  if it evaluates to $\top$ under every evaluation of the proposition letters.
  A rule $\rho$ is valid if for every assignment of the proposition letters:
  if all formulas in the antecedent of $\rho$ evaluate to $\top$, then the consequent also evaluates to $\top$.

\begin{theorem}\label{thm:alg-comp}
  Let $\Ax \subseteq \lan$ be a set of axioms (formulas).
  \begin{enumerate}
    \item $\MML \oplus \Ax$ is sound and complete with respect to the class of
          $\sigma$-modal algebras that validate $\Ax$.
    \item \label{it:alg-comp-mls}
          $\MLS \oplus \Ax$ is sound and complete with respect to the class of
          measurable $\sigma$-modal algebras that validate $\Ax$.
  \end{enumerate}
\end{theorem}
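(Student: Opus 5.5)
The proof is the standard Lindenbaum–Tarski argument, adapted to infinitary logic; no Lindenbaum lemma or maximal consistent sets are needed. \emph{Soundness} is checked rule by rule in an arbitrary $\sigma$-modal algebra $(B,\Diamond)$ validating $\Ax$ under an arbitrary assignment.
\begin{itemize}
\item Classical tautologies with countable connectives evaluate to $\top$ in any $\sigma$-Boolean algebra, since these obey the infinite De Morgan and distributivity-free laws that such tautologies express.
\item The join rule holds because $\bigvee_n b_n$ is the least upper bound.
\item The modal axiom is exactly~\eqref{eq:sMA}.
\item The necessitation-style rule holds because $\neg\phi=\top$ forces $\phi=\bot$, and $\Diamond\bot=\bot$.
\item Modus ponens and the substitution instances of $\Ax$ are immediate, the latter because validity quantifies over all assignments.
\item For (IDC) in a measurable algebra: the premises give $b_{n+1}\le b_n$ and $\bigwedge_n b_n=\bot$, so~\eqref{eq:good} yields $\bigwedge_n\Diamond b_n=\bot$, i.e.\ the conclusion evaluates to $\top$.
\end{itemize}
Since validity of every axiom and rule is preserved under any assignment, induction on derivations gives soundness.

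For \emph{completeness}, I build the Lindenbaum–Tarski algebra. Let $L$ be $\MML\oplus\Ax$ or $\MLS\oplus\Ax$, and set $\phi\sim\psi$ iff $\vdash_L\phi\leftrightarrow\psi$. The steps are:
\begin{enumerate}
\item Show that $\sim$ is a congruence for $\neg$, $\Diamond$ and countable $\bigvee$. For $\bigvee$, from $\phi_n\to\psi_n$ we get $\phi_n\to\bigvee\psi_m$, and the join rule applies. For $\Diamond$, derive monotonicity: from $\vdash\phi\to\psi$, the modal rule applied to $\neg(\phi\wedge\neg\psi)$ together with the modal axiom for binary joins gives $\Diamond\phi\to\Diamond\psi$.
\item Order $\lan/{\sim}$ by $[\phi]\le[\psi]$ iff $\vdash_L\phi\to\psi$. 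Tautologies make this a Boolean algebra.
\item Show that $[\bigvee_n\phi_n]$ is the least upper bound of the $[\phi_n]$. It is an upper bound by the tautology $\phi_k\to\bigvee_n\phi_n$, and it is least by the join rule. Meets then follow dually, so we have a $\sigma$-Boolean algebra.
\item Define $\Diamond[\phi]=[\Diamond\phi]$. The modal axiom gives countable join preservation, and the modal rule applied to $\neg\bot$ gives $\Diamond\bot=\bot$.
\item Check that $\Ax$ is valid. An assignment sends each letter $p$ to some $[\psi_p]$, and evaluating $\phi$ then yields $[\phi(\psi_p/p)]$, by induction on $\phi$. Since $L$ contains all substitution instances of $\Ax$, each such value is $[\top]$.
\item In the $\MLS$ case, check~\eqref{eq:good}. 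A descending chain $[\phi_0]\ge[\phi_1]\ge\cdots$ with $\bigwedge_n[\phi_n]=\bot$ means exactly that $\vdash\phi_{n+1}\to\phi_n$ for all $n$ and $\vdash\neg\bigwedge_n\phi_n$. These are the premises of (IDC), which yields $\vdash\neg\bigwedge_n\Diamond\phi_n$, i.e.\ $\bigwedge_n\Diamond[\phi_n]=\bot$.
\end{enumerate}
Finally, if $\nvdash_L\phi$, then under the canonical assignment $p\mapsto[p]$ the formula $\phi$ evaluates to $[\phi]\neq[\top]$. So $\phi$ fails in an algebra of the required class, and completeness follows.

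The main point needing care is step~3: the Lindenbaum algebra must really have countable joins computed by $\bigvee$. This relies on the join rule being an infinitary rule, so that theorems form a set closed under countably many premises. We must therefore treat derivability as the least set closed under the rules, not as finite derivations. Similarly, the clean match in step~6 depends on (IDC) being stated with exactly derivable premises, which is the case.
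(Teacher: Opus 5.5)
There is a genuine gap, in step~1: your derivation of monotonicity of $\Diamond$ does not work. Your overall plan is the paper's (rule-by-rule soundness, then the Lindenbaum--Tarski algebra). Your step~6 is a more direct check of \eqref{eq:good} than the paper's appeal to Rybakov's admissibility results.

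From $\vdash\phi\to\psi$, the modal rule gives $\vdash\neg\Diamond(\phi\wedge\neg\psi)$. The modal axiom then gives $\vdash\Diamond(\psi\vee(\phi\wedge\neg\psi))\to\Diamond\psi$. To finish you would still need $\vdash\Diamond\phi\to\Diamond(\psi\vee(\phi\wedge\neg\psi))$, or the same with $(\phi\wedge\psi)\vee(\phi\wedge\neg\psi)$. But $\phi$ is only \emph{provably equivalent} to such a disjunction, not syntactically a disjunct of it. Moving a provable equivalence under $\Diamond$ is exactly the congruence property you are trying to establish. The modal axiom only splits $\Diamond$ over a syntactic $\bigvee$, and the modal rule only concerns refutable formulas, so the argument is circular.

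This cannot be repaired inside the system as written. Call the \emph{leaves} of a formula what you reach by unfolding its top-level $\bigvee$'s; a letter, negation or $\Diamond$-formula is its own unique leaf. Take structures $(W,R_0,R_1,V)$ with the classical clauses for letters, $\neg$ and $\bigvee$. Let $w\models\Diamond\phi$ iff some leaf $\theta$ of $\phi$ holds at an $R_{c(\theta)}$-successor of $w$, where $c(\theta)=1$ if $\theta$ is a negation and $c(\theta)=0$ otherwise.

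\begin{itemize}
\item Every tautology instance and every instance of the modal axiom is true at every world.
\item Over finite such structures, validity is preserved by modus ponens and the join rule.
\item It is preserved by the modal rule, since if $\phi$ is false everywhere then so are its leaves.
\item It is preserved by (IDC), by pigeonhole on the finitely many successors.
\end{itemize}

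So, with $\Ax=\emptyset$, every theorem of $\MLS$, and hence of $\MML$, is valid there. But $\Diamond\neg\neg p\to\Diamond p$ fails at $w$ when $R_0=\emptyset$, $R_1=\{(w,v)\}$ and $v\models p$, although it holds in every $\sigma$-modal algebra. Hence $[\neg\neg p]=[p]$ while $[\Diamond\neg\neg p]\neq[\Diamond p]$, so $\Diamond$ is not well defined on the Lindenbaum--Tarski algebra.

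The paper's proof passes over the same point with ``a standard argument''. What is missing is a congruence rule such as $\frac{\phi\to\psi}{\Diamond\phi\to\Diamond\psi}$, which is sound in every $\sigma$-modal algebra. Once that rule is added, your steps 2--6 go through as written.

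A smaller point concerns soundness of infinitary tautology instances in an arbitrary $\sigma$-Boolean algebra. This is true, but it does not follow from De Morgan-type laws alone. It needs the classical Loomis--Sikorski theorem: lift the assignment to a $\sigma$-field of sets, where evaluation is pointwise two-valued, and then project along the quotient $\sigma$-homomorphism.
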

\begin{proof}
  Soundness of all these logics follows from the fact that all the axioms and rules are valid
  in every (measurable) $\sigma$-modal algebra $(B, \Diamond)$.
  For the propositional axioms and rules this follows from the fact
  that $B$ is a $\sigma$-Boolean algebra, and the modal axioms and rules
  correspond precisely to those for the algebras.
For example, it is easy to check that the Infinite Descending Chain rule (IDC) corresponds to 
  Equation~(\ref{eq:good}).

  For completeness, we use a standard Lindenbaum-Tarski construction; this is
  enabled by Proposition~\ref{prop:msa-quotient}.
  Let $L$ be the set of formulas modulo provable equivalence,
  i.e.~$\phi \sim_L \psi$ iff $L \vdash \phi\leftrightarrow \psi$. 
     The usual argument shows that every non-theorem of $L = \MML \oplus \Ax$ is refuted on this algebra. It remains to show that this algebra is a $\MML \oplus \Ax$-algebra, which can be verified using a standard argument.
     
     Finally, for $\MLS \oplus \Ax$ we need to show that the Lindenbaum-Tarski algebra validates $\MLS \oplus \Ax$,  in  particular, that it validates (IDC). However, this also follows from general considerations. Note that 
given a logic $L$, all the rules of $L$ are admissible in it
     \cite{Rybakov97}. Also, in any variety corresponding to a logic $L$, the Lindenbaum-Tarski algebra of $L$ validates a rule $\rho$ if and only if $\rho$ is admissible in $L$ \cite[Theorems~1.3.21 and~1.4.5]{Rybakov97}. Thus, the Lindenbaum-Tarski algebra of $\MLS \oplus \Ax$ validates (IDC) and, as above, it also validates $\Ax$, which finishes the proof. 
\end{proof}

%%%%%%%%%%%%%%%%%%%%%%%%%%%%%%%%%%%%%%%%%%%%%%%%%%%%%%%%%%%%%%%%%%%%%%%%%%%%%%%%
\section{Modal measurable spaces}\label{sec:mms}

We turn to the semantic structures in which we will interpret $\lan$. 
We use measurable spaces with an
  additional relation $R$, where propositional connectives are interpreted in
  the measurable space as usual and the modal operator $\Diamond$ is
  interpreted via $R$. On the one hand, such a semantic structure may be viewed as a modal
  analogue of a measurable space.  On the other hand it is a Kripke frame
  with a measurable space structure.

  We prove that such structures provide a sound semantics for $\MML$,
  and characterise precisely for which such spaces the infinite descending
  chain rule~\eqref{eq:inf-diamond} is valid.
  In~Section~\ref{sec:marked} we take the key step of this paper
  by extending this semantics to incorporate an ideal of
  designated null-sets.  

\begin{definition}\label{def:mod-meas-space}
  A \emph{modal measurable space} is a tuple $\mo{F} = (X, R, \Sigma)$
  consisting of a measurable space $(X, \Sigma)$ and a relation $R$ such that
  for every $a \in \Sigma$ we have
  $\ddiamond a := \{ x \in X \mid R[x] \cap a \neq \emptyset \} \in \Sigma$.
  
  A \emph{modal measurable model} is a pair $\mo{M} = (\mo{F}, V)$
  consisting of a modal measurable space $\mo{F}$ and a valuation
  $V : \Prop \to \Sigma$ that assigns to each proposition letter $p$
  a measurable subset of $X$.
  The interpretation of an $\lan$-formula $\phi$ in $\mo{M}$ is defined
  recursively via
  \begin{equation*}
    \ts{p}{M} = V(p), \qquad
    \ts{\neg\phi}{M} = X \setminus \ts{\phi}{M}, \qquad
    \ts{\tbigvee_{n \in \omega} \phi_n}{M} = \bigcup_{n \in \omega} \ts{\phi_n}{M}, \qquad
    \ts{\Diamond\phi}{M} = \ddiamond \ts{\phi}{M}.
  \end{equation*}
  A formula $\phi$ is \emph{valid} in a modal measurable space
  $\mo{F} = (X, R, \Sigma)$ if $\ts{\phi}{M} = X$ for every modal measurable
  model of the form $\mo{M} = (\mo{F}, V)$.
  We write $\mathsf{MMS} \Vdash \phi$ if $\phi$ is valid on all modal measurable spaces.
\end{definition}

  We clearly have:

\begin{lemma}
  Let $\mo{M}$ be a measurable Kripke model.
  Then $\llb \phi \rrb$ is measurable for each formula $\phi$.
\end{lemma}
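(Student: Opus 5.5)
The plan is to argue by structural induction on the $\lan$-formula $\phi$, showing simultaneously for every subformula that $\ts{\phi}{M} \in \Sigma$. Here $\mo{M} = (X, R, \Sigma, V)$ is a modal measurable model in the sense of Definition~\ref{def:mod-meas-space}; the phrase ``measurable Kripke model'' in the statement means exactly this. Since formulas of $\lan$ are well-founded trees of countable branching (each $\tbigvee_{n \in \omega}\phi_n$ has countably many immediate subformulas), induction on formula construction is legitimate. One can view this as transfinite induction on the rank of the formula tree, which is a countable ordinal.

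The cases go as follows.
\begin{itemize}
  \item For a proposition letter $p$, we have $\ts{p}{M} = V(p) \in \Sigma$ because the valuation takes values in $\Sigma$.
  \item For $\neg\phi$, the set $X \setminus \ts{\phi}{M}$ lies in $\Sigma$ since $\Sigma$ is a $\sigma$-algebra and hence closed under complements.
  \item For $\tbigvee_{n \in \omega}\phi_n$, the induction hypothesis gives $\ts{\phi_n}{M} \in \Sigma$ for every $n$. The countable union $\bigcup_{n \in \omega}\ts{\phi_n}{M}$ is then in $\Sigma$ by closure of $\sigma$-algebras under countable unions.
  \item For $\Diamond\phi$, the induction hypothesis gives $\ts{\phi}{M} \in \Sigma$. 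The defining condition of a modal measurable space then yields $\ddiamond\ts{\phi}{M} \in \Sigma$.
\end{itemize}

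The derived connectives $\top$, $\bot$, $\wedge$, $\bigwedge$, $\to$ and $\Box$ need no separate treatment, since they are abbreviations built from the primitive ones.

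There is no real obstacle here. The only points needing care are, first, that the induction is over infinitely branching but well-founded syntax trees. Second, the modal clause is precisely where the requirement that $\ddiamond$ preserves $\Sigma$, built into Definition~\ref{def:mod-meas-space}, is used. Without that condition the lemma would fail.
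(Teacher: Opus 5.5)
Your proposal is correct. The paper gives no written proof (it introduces the lemma with ``We clearly have''), and the intended argument is exactly your structural induction on formulas, where the modal clause uses the closure of $\Sigma$ under $\ddiamond$ built into Definition~\ref{def:mod-meas-space}.
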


  The complex algebra of a modal measurable space is defined as expected:

\begin{definition}
  The \emph{complex algebra} of a modal measurable space $\mo{F} = (X, R, \Sigma)$
  is $\mo{F}^+ := (\Sigma, \ddiamond)$.
\end{definition}

  The semantic interpretation of $\phi$ corresponds to the
  algebraic interpretation in $\mo{F}^+$, viewing an assignment $V$ as an assignment
  for $\mo{F}^+$. The following proposition entails that
  $\MML$ is sound with respect to modal measurable spaces
  by showing that the
  complex algebra of every modal measurable space is a $\sigma$-modal algebra.

\begin{lemma}\label{lem:sound-alg}
  Let $\mo{F} = (X, R, \Sigma)$ be a modal measurable space.
  Then $\mo{F}^+$ is a $\sigma$-modal algebra.
\end{lemma}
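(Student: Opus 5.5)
The plan is to check the two ingredients of a $\sigma$-modal algebra directly: that $\Sigma$ is a $\sigma$-Boolean algebra, and that $\ddiamond$ satisfies the equations in~\eqref{eq:sMA}.

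First, $\Sigma$ is a $\sigma$-algebra of subsets of $X$. It is therefore closed under complements, countable unions and countable intersections. Ordered by inclusion, it is a Boolean algebra in which countable joins are unions and countable meets are intersections. Indeed, a union of countably many members of $\Sigma$ lies in $\Sigma$ and is the least upper bound of those members within $\Sigma$; meets are dual. Hence $\Sigma$ is a $\sigma$-Boolean algebra. By the definition of a modal measurable space (Definition~\ref{def:mod-meas-space}), $\ddiamond$ maps $\Sigma$ into $\Sigma$, so $\ddiamond$ is a well-defined unary operator on $\Sigma$.

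Next, verify~\eqref{eq:sMA} by pointwise computation. For $\ddiamond\emptyset=\emptyset$: no $x$ has $R[x]\cap\emptyset\neq\emptyset$. For countable joins, take any sequence $(a_n)_{n\in\omega}$ in $\Sigma$. Then $x\in\ddiamond\bigcup_n a_n$ holds iff some $y$ with $xRy$ lies in some $a_n$. This in turn holds iff there is an $n$ with $R[x]\cap a_n\neq\emptyset$, that is, iff $x\in\bigcup_n\ddiamond a_n$. Since joins in $\Sigma$ are unions, this is exactly $\bigvee_n\ddiamond a_n=\ddiamond\bigvee_n a_n$.

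There is no real obstacle here. The only point needing care is that the abstract countable join in $\Sigma$ coincides with the set-theoretic union. This is what makes the pointwise computation of $\ddiamond$ legitimate, in contrast to the $\sigma$-Stone setting of~\eqref{structure-clopens}, where joins are closures of unions.
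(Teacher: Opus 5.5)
Your proposal is correct and follows the paper's proof. The paper likewise notes that $\Sigma$ is a $\sigma$-Boolean algebra and checks pointwise that $\ddiamond\emptyset=\emptyset$ and $\ddiamond\bigcup_n a_n=\bigcup_n\ddiamond a_n$; your extra remarks (that $\ddiamond$ is well defined on $\Sigma$, and that joins in $\Sigma$ are unions) make explicit what the paper takes for granted.
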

\begin{proof}
  We know that $\Sigma$ is a $\sigma$-Boolean algebra, so we only
  verify the equations from~\eqref{eq:sMA}.
  To this end, compute
  $\ddiamond \emptyset = \{ x \in X \mid R[x] \cap \emptyset \neq \emptyset \} = \emptyset$ and
  \begin{equation*}
    \ddiamond \tbigcup_{n \in \omega} b_n
      = \{ x \in X \mid R[x] \cap (\tbigcup_{n \in \omega} b_n) \neq \emptyset \}
      = \{ x \in X \mid R[x] \cap b_n \neq \emptyset \text{ for some } n \in \omega \}
      = \tbigcup_{n \in \omega} \ddiamond b_n.
  \end{equation*}
  It follows that $\mo{F}^+$ is indeed a $\sigma$-modal algebra.
\end{proof}

The following result characterises the modal measurable spaces which validate
  the rule~\eqref{eq:inf-diamond}.   It is not needed in what follows.

\begin{definition} \
  \begin{enumerate}
  \item
  Let $(X, \Sigma)$ be a measurable space.
  We call a set $D \subseteq X$ \emph{measurably compact}
  if for every countable measurable cover $\mc{C} = \{ c_n \mid n \in \omega \}$
  of $X$ there exists a finite subset $\mc{C}' \subseteq \mc{C}$
  that covers $D$.
  \item
  A modal measurable space $(X, R, \Sigma)$ is called
  \emph{pointwise measurably compact} if the set $R[x]$ is measurably compact
  for each $x \in X$.
  \end{enumerate}
\end{definition}

\begin{lemma}\label{lem:ls}
  Let $\mo{F} = (X, R, \Sigma)$ be a modal measurable space.
  Then $\mo{F}^+$ is a measurable $\sigma$-modal algebra if and only
  if $\mo{F}$ is pointwise measurably compact.
\end{lemma}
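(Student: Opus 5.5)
The plan is to unfold both sides into statements about individual points $x \in X$ and to pass between descending chains and countable covers by taking complements. The key observation is that in $\mo{F}^+ = (\Sigma, \ddiamond)$ the countable meets and joins are the set-theoretic intersections and unions, because $\Sigma$ is a $\sigma$-algebra of sets. By Lemma~\ref{lem:sound-alg}, $\mo{F}^+$ is already a $\sigma$-modal algebra. So $\mo{F}^+$ is measurable precisely when the following holds: for every descending chain $b_0 \supseteq b_1 \supseteq \cdots$ in $\Sigma$ with $\bigcap_n b_n = \emptyset$, we have $\bigcap_n \ddiamond b_n = \emptyset$. Pointwise, the latter says that no $x$ has $R[x] \cap b_n \neq \emptyset$ for all $n$.

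For the direction from pointwise measurable compactness to~\eqref{eq:good}, let $(b_n)$ be such a chain and put $c_n := X \setminus b_n$. Then $(c_n)$ is an ascending sequence of measurable sets, and since $\bigcap_n b_n = \emptyset$ it covers $X$. Suppose some $x$ lies in $\bigcap_n \ddiamond b_n$. Measurable compactness of $R[x]$ gives finitely many $c_{n_1}, \dots, c_{n_k}$ covering $R[x]$. Because the $c_n$ ascend, the single set $c_N$ with $N = \max n_i$ covers $R[x]$, so $R[x] \cap b_N = \emptyset$. This contradicts $x \in \ddiamond b_N$.

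For the converse, assume~\eqref{eq:good} holds in $\mo{F}^+$. Fix $x \in X$ and a countable measurable cover $\{c_n \mid n \in \omega\}$ of $X$. Set $d_n := c_0 \cup \dots \cup c_n$ and $b_n := X \setminus d_n$. These $b_n$ form a descending chain in $\Sigma$ whose intersection is empty, since the $c_n$ cover $X$. By~\eqref{eq:good}, $\bigcap_n \ddiamond b_n = \emptyset$, so there is some $N$ with $x \notin \ddiamond b_N$. That means $R[x] \cap b_N = \emptyset$, i.e.\ $R[x] \subseteq d_N$, and so $c_0, \dots, c_N$ is the required finite subcover of $R[x]$.

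There is no real obstacle here. The only point needing care is to stress that meets in $\Sigma$ are genuine intersections, unlike the interiors appearing in~\eqref{structure-clopens} for $\sigma$-Stone spaces. This is what lets the algebraic condition be read pointwise. The other small step is passing from an arbitrary cover to an ascending one via the finite unions $d_n$.
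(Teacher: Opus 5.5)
Your proof is correct and is essentially the paper's argument: the complements of the chain give the measurable cover in one direction, and the complements of the finite unions $c_0 \cup \cdots \cup c_n$ give the descending chain in the other (the paper argues this second direction by contraposition, while you argue it directly). Your choice $N = \max n_i$ is the right one; at the corresponding step the paper writes $m := \min\{n_1,\dots,n_k\}$, which is a slip, since finitely many members of a descending chain intersect to the member with the largest index.
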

\begin{proof}
  Suppose that $\mo{F}$ is pointwise measurably compact
  and let $(a_n)_{n \in \omega}$ be a countable sequence of
  elements of $\mo{F}^+$ such that $a_n \geq a_{n+1}$ for all $n \in \omega$,
  and $\bigcap_{n \in \omega} a_n = \emptyset$.
  We need to show that $\bigcap_{n \in \omega} \ddiamond a_n = \emptyset$.

  By assumption $\{ X \setminus a_n \mid n \in \omega \}$ is a measurable cover
  of $X$, so for any $x \in X$ we can find a finite number
  $a_{n_1}, \ldots, a_{n_k}$ of $a_n$'s such that
  $R[x] \subseteq (X \setminus a_{n_1}) \cup \cdots \cup (X \setminus a_{n_k})$.
  It follows that $R[x] \cap a_{n_1} \cap \cdots \cap a_{n_k} = \emptyset$.
  Since the $a_n$ form a descending chain, we can take
  $m := \min \{ n_1, \ldots, n_k \}$ to get $R[x] \cap a_m = \emptyset$.
  This proves $x \notin \ddiamond a_m$, hence $x \notin \bigcap_{n \in \omega} \ddiamond a_n$.
  Since $x \in X$ was arbitrary, we conclude $\bigcap_{n \in \omega} \ddiamond a_n = \emptyset$, as desired.

  For the converse, suppose there exists some countable measurable cover
  $\mc{C}$ of $X$ and a world $x \in X$ such that $R[x]$ cannot be
  covered by any finite subset of $\mc{C}$.
  Write $\mc{C} = \{ c_n \mid n \in \omega \}$.
  Define $a_n := X \setminus (c_0 \cup \cdots \cup c_{n-1})$.
  Then $a_n \supseteq a_{n+1}$ for all $n \in \omega$ and
  $\bigcap_{n \in \omega} a_n = X \setminus \bigcup_{n \in \omega} c_n = X \setminus X = \emptyset$, so the premisses of~\eqref{eq:inf-diamond} are valid.
  But for every $n \in \omega$ we have $R[x] \not\subseteq c_0 \cup \cdots \cup c_{n-1}$,
  so $R[x] \cap a_n \neq \emptyset$, hence $x \in \ddiamond a_n$.
  Therefore $x \in \bigcap_{n \in \omega} \ddiamond a_n \neq \emptyset$,
  showing that~\eqref{eq:inf-diamond} is not valid on $\mo{F}^+$.
\end{proof}

\begin{example}
Here is an example of a 
modal measurable space that is not pointwise measurably compact.
  Consider $\mo{F} = (X, R, \Sigma)$ with
  $X = \omega$, i.e.~the set of natural numbers,
  $R[x] = \omega$ for all $x \in X$ and $\Sigma = \mathcal{P}(\omega)$.
  Then $\mo{F}$ is a modal measurable space, but it is not
  pointwise measurably compact.
  Indeed, $\mathcal{C} = \{ \{ n \} \mid n \in \omega \}$
  is a measurable cover of $X$ but for any $x \in X$,
  since $R[x] = \omega$, there is no finite subcover of $\mathcal{C}$
  covering~$R[x]$.
\end{example}

\begin{theorem}
  The logic $\MML$ is sound with respect to the class of all
  modal measurable spaces.
  The logic $\MLS$ is sound with respect to the class of 
  pointwise measurably compact modal measurable spaces.
\end{theorem}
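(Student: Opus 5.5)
The plan is to reduce both soundness claims to the algebraic soundness of Theorem~\ref{thm:alg-comp} via the complex algebra. The key observation is that for any modal measurable model $\mo{M} = (\mo{F}, V)$, the semantic interpretation $\ts{\phi}{M}$ coincides with the algebraic interpretation of $\phi$ in $\mo{F}^+ = (\Sigma, \ddiamond)$ under the assignment $V$. This is checked by a routine induction on $\phi$. Negation, countable joins and $\Diamond$ are interpreted in $\mo{F}^+$ as complement, countable union and $\ddiamond$, because $\Sigma$ is a $\sigma$-algebra of sets in which countable joins really are unions. Every valuation $V : \Prop \to \Sigma$ is exactly an assignment for $\mo{F}^+$ and conversely. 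So $\phi$ is valid in $\mo{F}$ if and only if $\mo{F}^+ \Vdash \phi$.

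For the first claim, let $\mo{F}$ be any modal measurable space. By Lemma~\ref{lem:sound-alg}, $\mo{F}^+$ is a $\sigma$-modal algebra. By the soundness half of Theorem~\ref{thm:alg-comp} (with $\Ax = \emptyset$), every theorem of $\MML$ is valid in $\mo{F}^+$, hence valid in $\mo{F}$ by the observation above. Alternatively, one could verify directly that each axiom is valid and each rule preserves validity in $\mo{F}$. For the join rule, the modal axiom and rule, and modus ponens, this is immediate from the set-theoretic clauses together with the computation in Lemma~\ref{lem:sound-alg}. I would still route everything through the algebra to avoid duplicating work.

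For the second claim, let $\mo{F}$ be pointwise measurably compact. By Lemma~\ref{lem:ls}, $\mo{F}^+$ is a measurable $\sigma$-modal algebra. Item~\ref{it:alg-comp-mls} of Theorem~\ref{thm:alg-comp} then gives that every theorem of $\MLS$ is valid in $\mo{F}^+$, and hence in $\mo{F}$.

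The only point needing care is that validity of a rule here is the \emph{local} notion: for each fixed assignment, if the premisses evaluate to $\top$ then so does the conclusion. This is the notion used in the soundness part of Theorem~\ref{thm:alg-comp}, and it is stronger than preservation of validity on frames, so passing through the algebra loses nothing. For (IDC) concretely, the premisses under $V$ say that the sets $a_n = \ts{\phi_n}{M}$ form a descending chain in $\Sigma$ with empty intersection. Condition~\eqref{eq:good}, supplied by Lemma~\ref{lem:ls}, then yields $\bigcap_n \ddiamond a_n = \emptyset$, which is the conclusion. I expect no real obstacle: the main thing is to state the induction identifying the two semantics cleanly, so that both claims become one-line corollaries of Lemma~\ref{lem:sound-alg}, Lemma~\ref{lem:ls} and Theorem~\ref{thm:alg-comp}.
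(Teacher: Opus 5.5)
Your proposal is correct and follows essentially the paper's own argument: identify the Kripke semantics with evaluation in the complex algebra $\mo{F}^+$, then apply Lemma~\ref{lem:sound-alg} for $\MML$, Lemma~\ref{lem:ls} for $\MLS$, and the algebraic soundness of Theorem~\ref{thm:alg-comp}. Your explicit induction showing that $\ts{\phi}{M}$ equals the algebraic value in $\mo{F}^+$, and your remark that rules are valid locally (per assignment), only spell out what the paper's one-line proof leaves implicit.
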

\begin{proof}
  This follows from Lemmas~\ref{lem:sound-alg}
and~\ref{lem:ls}
  and the fact that the algebraic semantics is sound with respect to the
  two logics.
\end{proof}

We end this section with the following open problem:

\begin{problem}\label{problem}
Are $\MML$ and $\MLS$ complete with respect to suitable classes of modal measurable spaces? If not, can we axiomatise the logics of (pointwise measurably compact) modal measurable spaces?
\end{problem}

%%%%%%%%%%%%%%%%%%%%%%%%%%%%%%%%%%%%%%%%%%%%%%%%%%%%%%%%%%%%%%%%%%%%%%%%%%%%%%%%
\section{Marked modal measurable spaces}\label{sec:marked}

We ended the previous section with the open problem of axiomatising the logic of modal measurable spaces. 
In this section we introduce \emph{marked} modal measurable spaces, which can be considered as ``point-free  analogues'' of modal measurable spaces. 
Marked modal measurable spaces extend modal measurable spaces with a designated $\sigma$-ideal of  (marked) null-sets.
Their use is closely connected to a point-free approach to ergodic theory, see e.g.~\cite{JTao2023Foundational, Pavlov2020}.  We  introduce a new semantics on these spaces and 
prove in Section~\ref{sec:comp} that $\MLS$ is sound and complete with respect to this semantics. 

\begin{definition}
  A \emph{marked modal measurable space} or \emph{mmm-space} is a tuple
  $(X, R, \Sigma, N)$ where
  \begin{itemize}
    \item $(X, R, \Sigma)$ is a modal measurable space; and
    \item $N$ is a modal $\sigma$-ideal on $\mo{F}^+ = (\Sigma, \ddiamond)$
          such that for all countable collections of measurable
          sets $(a_n)_{n \in \omega}$,
          \begin{equation*}
            \bigcap_{n \in \omega} a_n \in N
            \quad\text{implies}\quad
            \bigcap_{n \in \omega} \ddiamond a_n \in N.
          \end{equation*}
  \end{itemize}
  The \emph{complex algebra} of an mmm-model is the $\sigma$-modal algebra
  $\mo{F}^{\dagger} := (\Sigma, \ddiamond)/N$.
\end{definition}

  It follows from Lemma~\ref{lem:sound-alg} and
  Proposition~\ref{prop:msa-quotient}
  that $\mo{F}^{\dagger}$ is a $\sigma$-modal algebra.

\begin{example}\label{bernoulli}
Let $(X,\mathcal{O})$ be a topological space,
let $\Sigma$ be the induced Borel $\sigma$-algebra, 
and let $\mu$ be a probability measure on $\Sigma$.
Let $f : X \to X$ be a \emph{measure-preserving} transformation, as in 
dynamical systems and ergodic theory, i.e.~$f^{-1}(b) \in \Sigma$ and $\mu f^{-1}(b) = \mu(b)$ for all $b \in \Sigma$.
Let $R$ be the graph of $f$, i.e.~$xRy$ iff $f(x) = y$,
and let $N$ be the colletion of null-sets.
Then $\ddiamond a = f^{-1}(a)$ for any measurable set $a$.
Furthermore, for any sequence $(a_n)_{n \in \omega}$ of measurable sets whose intersection is in $N$, 
we have 
$
\bigcap \ddiamond a_n = \bigcap f^{-1}(a_n) = f^{-1}(\bigcap a_n) \in N 
$
because $f$ is measure-preserving.
Therefore $(X, R, \Sigma, N)$ is an mmm-space.

As a concrete example, we may consider $(X, \mathcal{O})$ to be the unit interval with the Euclidean topology, $\mu$ the Lebesgue measure and $f(x)$ the Bernoulli map $f(x) = 2x \mathrel{\mathrm{mod}} 1$.
\end{example}

\begin{definition}
  An \emph{mmm-model} is a pair $\mo{M} = (\mo{F}, V)$ consisting of an
  mmm-space $\mo{F} = (X, R, \Sigma, N)$ and a valuation $V : \Prop \to \Sigma$.
  The \emph{interpretation} of an $\lan$-formula $\phi$ in $\mo{M}$
  is the equivalence class of $\ts{\phi}{}$ in the complex algebra $\mo{F}^{\dagger}$,
  which we denote by $\mts{\phi}{M}$
  for the class of mmm-spaces.

  A formula $\phi$ is said to be \emph{valid} on an mmm-space $\mo{F}$
  if $\mts{\phi}{M}$ is the top element of $\mo{F}^{\dagger}$ for every mmm-model
  $\mo{M} = (\mo{F}, V)$ based on $\mo{F}$.
  In other words, $\phi$ is valid on $\mo{F}$ if $X \setminus \ts{\phi}{M} \in N$
  for every mmm-model $\mo{M}$ based on $\mo{F}$.
For a set $\Ax$,  we write $\cat{MMM}(\Ax)$ for class of mmm-spaces validating all formulas in $\Ax$.
\end{definition}

\begin{theorem}\label{thm:mls-sound}
  For any $\Ax \subseteq \lan$, the logic $\MLS \oplus \Ax$ is sound with
  respect to $\cat{MMM}(\Ax)$.
\end{theorem}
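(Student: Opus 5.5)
Soundness reduces to the algebraic soundness in Theorem~\ref{thm:alg-comp}(\ref{it:alg-comp-mls}). For an mmm-space $\mo{F} = (X, R, \Sigma, N)$ and any valuation $V$, we have $\mts{\phi}{M} = [\ts{\phi}{M}]$, and the quotient map $\Sigma \to \Sigma/N$ is a $\sigma$-homomorphism commuting with $\Diamond$ (by definition of $\Diamond^N$). Hence $\mts{\phi}{M}$ is the value of $\phi$ in $\mo{F}^\dagger$ under the assignment $p \mapsto [V(p)]$. Conversely, every assignment in $\mo{F}^\dagger$ arises this way, since each class $[b]$ has a representative $b \in \Sigma$. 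So $\phi$ is valid on $\mo{F}$ in the mmm-sense if and only if $\mo{F}^\dagger \Vdash \phi$. In particular, $\mo{F} \in \cat{MMM}(\Ax)$ means exactly that $\mo{F}^\dagger$ validates $\Ax$.

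The plan is therefore to show that $\mo{F}^\dagger$ is a \emph{measurable} $\sigma$-modal algebra, and then invoke soundness of $\MLS \oplus \Ax$ with respect to measurable $\sigma$-modal algebras validating $\Ax$. That $\mo{F}^\dagger$ is a $\sigma$-modal algebra is already noted after the definition, via Lemma~\ref{lem:sound-alg} and Proposition~\ref{prop:msa-quotient}. It remains to verify condition~\eqref{eq:good}.

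Let $[b_0] \geq [b_1] \geq \cdots$ be a descending chain in $\Sigma/N$ with $\bigwedge^N_n [b_n] = \bot$. Since meets in the quotient are computed as $[\bigcap_n b_n]$ (by De Morgan from the join definition), the hypothesis says $\bigcap_n b_n \in N$. The defining condition of an mmm-space applies to arbitrary countable families, not just chains, so it gives $\bigcap_n \ddiamond b_n \in N$. That is, $\bigwedge^N_n \Diamond^N [b_n] = [\bigcap_n \ddiamond b_n] = \bot$, as required.

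The only point that needs care is the identification of countable meets in $\Sigma/N$ with classes of set-theoretic intersections, together with the observation that we may work with arbitrary representatives $b_n$, which need not themselves form a descending chain. Since the mmm-condition imposes no chain hypothesis, this causes no trouble, and I expect no real obstacle beyond this bookkeeping.
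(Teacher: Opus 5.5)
Your proposal is correct and matches the paper's proof. Both show that $\mo{F}^\dagger$ is a measurable $\sigma$-modal algebra validating $\Ax$, using the mmm-space condition to obtain \eqref{eq:good}, and then appeal to the algebraic soundness of Theorem~\ref{thm:alg-comp}(\ref{it:alg-comp-mls}). You additionally make explicit two points the paper leaves implicit: validity on $\mo{F}$ coincides with validity in $\mo{F}^\dagger$, and countable meets in $\Sigma/N$ are computed via arbitrary representatives.
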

\begin{proof}
  Let $\mo{F} = (X, R, \Sigma, N)$ be an mmm-space that validates $\Ax$.
  Then its complex algebra $\mo{F}^{\dagger}$ is a $\sigma$-modal algebra
  by Proposition~\ref{prop:msa-quotient} and it validates $\Ax$ by
  assumption. Moreover, since $\bigcap_{n \in \omega} a_n \in N$
  implies $\bigcap_{n \in \omega} \ddiamond a_n \in N$, the complex
  algebra also satisfies~\eqref{eq:inf-diamond}.
  Therefore $\mo{F}^{\dagger}$ is a measurable $\sigma$-modal algebra
  that validates $\Ax$. Soundness now follows from the algebraic soundness given in Theorem~\ref{thm:alg-comp}\eqref{it:alg-comp-mls}.
\end{proof}

%%%%%%%%%%%%%%%%%%%%%%%%%%%%%%%%%%%%%%%%%%%%%%%%%%%%%%%%%%%%%%%%%%%%%%%%%%%%%%%%
\section{J{\'o}nsson-Tarski duality for (measurable) $\sigma$-modal algebras}

  We restrict J{\'o}nsson-Tarski duality (see Section~\ref{sec:prelim}) to
  a duality for $\sigma$-modal algebras and measurable $\sigma$-modal algebras.
  In order to prove completeness of $\MLS$ in Section~\ref{sec:comp},
  we only need this restriction on objects, so we focus on restricting the
  object part of J{\'o}nsson-Tarski duality.

\begin{definition}
  Let $(X, \tau)$ be a Stone space.
  A \emph{basic null-set} is a set of the form
  $\bigcap_{n \in \omega} a_n$, where $(a_n)_{n \in \omega}$ is a
  sequence of clopen sets of $(X, \tau)$ such that
  $\interior(\bigcap_{n \in \omega} a_n) = \emptyset$.
  A \emph{Baire null-set} is a subset of $X$ that can be covered by
  countably many basic null-sets.
\end{definition}

\begin{definition}
  A \emph{$\sigma$-modal space} is a modal space $\mb{X} = (X, \tau, R)$ such that
  $(X, \tau)$ is a $\sigma$-Stone space and such that for any countable sequence
  $(a_n)_{n \in \omega}$ of clopen sets,
  \begin{equation}\label{eq:1}
    \ddiamond \Big(\overline{\bigcup_{n \in \omega} a_n}\Big)
      = \overline{\ddiamond \Big(\bigcup_{n \in \omega} a_n \Big)}.
  \end{equation}
  A \emph{measurable $\sigma$-modal space} is a $\sigma$-modal space that additionally
  satisfies that for every Baire null-set $S$, the set
  \begin{equation*}
    \ddiamond S := \{ x \in X \mid R[x] \cap S \neq \emptyset \}
  \end{equation*}
  is a Baire null-set too.
\end{definition}

  We will show that these properties correspond to the
  J{\'o}nsson-Tarski duals being (measurable) $\sigma$-modal algebras.
In the next result, we recall that the $\sigma$-Boolean algebra structure in 
the dual space $\mb{X}$  of a Boolean (or modal) algebra $B$ is given by \eqref{structure-clopens}.

\begin{theorem}\label{thm:JT-restriction}
  Let $(B, \Diamond)$ be a modal algebra and $\mb{X} = (X, \tau, R)$ its
  dual modal space. Then
  \begin{enumerate}
    \item $(B, \Diamond)$ is a $\sigma$-modal algebra if and only if
          $\mb{X}$ is a $\sigma$-modal space;
    \item $(B, \Diamond)$ is a measurable $\sigma$-modal algebra if and only if
          $\mb{X}$ is a measurable $\sigma$-modal space.
  \end{enumerate}
\end{theorem}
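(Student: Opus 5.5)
Both items reduce to translating conditions on $(B,\Diamond)$ into conditions on $\mb{X}$ along the isomorphism $\theta_B : (B,\Diamond) \to (\clp(X,\tau),\ddiamond)$ of J\'onsson-Tarski duality. Throughout I will use the $\sigma$-structure on clopens from~\eqref{structure-clopens} and Theorem~\ref{thm:dual-sba}, which says that $B$ is a $\sigma$-Boolean algebra iff $(X,\tau)$ satisfies~\eqref{eq:sStone}.

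\emph{Item 1.} Assume $B$ is a $\sigma$-Boolean algebra and let $(a_n)$ be clopens. Then $\bigvee a_n=\cl{\bigcup a_n}$, so the algebraic identity $\Diamond\bigvee b_n=\bigvee\Diamond b_n$ reads $\ddiamond(\cl{\bigcup a_n}) = \cl{\bigcup \ddiamond a_n}$. Since $\ddiamond$ commutes with arbitrary unions, $\bigcup\ddiamond a_n=\ddiamond(\bigcup a_n)$, and this is exactly~\eqref{eq:1}. The condition $\Diamond\bot=\bot$ holds in any modal algebra. Both directions follow, because $\theta_B$ is a modal isomorphism and every countable family of clopens is $\theta_B$ of a countable family in $B$.

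\emph{Item 2.} The key step is to identify basic null-sets with meets that are $\bot$. For a decreasing sequence of clopens $a_n$, we have $\bigwedge a_n=\interior(\bigcap a_n)$, so $\bigwedge a_n=\bot$ iff $\bigcap a_n$ is a basic null-set. Any basic null-set $\bigcap a_n$ can be written with a decreasing sequence by replacing $a_n$ with $a_0\cap\dots\cap a_n$.

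($\Leftarrow$) Suppose $\mb{X}$ is measurable and $(a_n)$ is decreasing with $\interior\bigcap a_n=\emptyset$. I claim $\bigcap_n\ddiamond a_n=\ddiamond\bigcap_n a_n$. The inclusion $\supseteq$ is clear. For $\subseteq$, take $x$ with $R[x]\cap a_n\neq\emptyset$ for all $n$. Since $R[x]$ is closed in a compact space, the closed sets $R[x]\cap a_n$ form a decreasing family with the finite intersection property, so their intersection is nonempty. By hypothesis $\ddiamond\bigcap a_n$ is a Baire null-set. A Baire null-set has empty interior by the Baire category theorem: each basic null-set is closed and nowhere dense, and a compact Hausdorff space is Baire. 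Since $\bigwedge\Diamond a_n=\interior\bigcap\ddiamond a_n$, this meet is $\bot$.

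($\Rightarrow$) Let $S$ be a Baire null-set, say $S\subseteq\bigcup_k S_k$ with each $S_k=\bigcap_n a^k_n$ basic and decreasing. Then $\ddiamond S\subseteq\bigcup_k\ddiamond S_k$, and by the compactness argument above $\ddiamond S_k=\bigcap_n\ddiamond a^k_n$. This is an intersection of clopens, and by~\eqref{eq:good} its interior is $\bigwedge_n\Diamond a^k_n=\bot$, so it is a basic null-set. Hence $\ddiamond S$ is covered by countably many basic null-sets. One check remains: by the definition, Baire null-sets are exactly the subsets of countable unions of basic null-sets, so any subset of such a union qualifies and the covering suffices.

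The main obstacle is the identity $\ddiamond\bigcap a_n=\bigcap\ddiamond a_n$, which rests on point-closedness of $R$ and compactness, together with the Baire category fact that Baire null-sets have empty interior. Everything else is bookkeeping through $\theta_B$.
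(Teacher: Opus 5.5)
Your proof is correct and follows the paper's argument essentially step for step. For item 1 you transport the distributivity identity along $\theta_B$ using $\bigvee a_n = \cl{\bigcup a_n}$. For item 2 you use the compactness identity $\ddiamond\bigcap a_n = \bigcap\ddiamond a_n$ for decreasing chains of clopens. Your explicit appeal to the Baire category theorem (Baire null-sets have empty interior) fills in a step the paper leaves implicit: the paper passes from ``$\bigcap\ddiamond a_n$ is a Baire null-set'' to \eqref{eq:good} without stating it.
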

\begin{proof}
  (1) \; 
  Suppose $(B, \Diamond)$ is a $\sigma$-modal algebra.
  Then we know from Theorem~\ref{thm:dual-sba} that $(X, \tau)$
  is a $\sigma$-Stone space.
  Furthermore, for any countable sequence $(b_n)_{n \in \omega}$
  of elements in $B$ we have
  \begin{equation*}
    \bigvee_{n \in \omega} \Diamond b_n = \Diamond \bigvee_{n \in \omega} b_n
  \end{equation*}
  so it follows from J{\'o}nsson-Tarski duality that for any sequence
  $(a_n)_{n \in \omega}$ of elements in $\clp(\mb{X})$ we have
  \begin{equation*}
    \cl{\bigcup_{n \in \omega} \ddiamond a_n} = \ddiamond \cl{\bigcup_{n \in \omega} a_n}
  \end{equation*}
  By definition of $\ddiamond$ we have
  $\ddiamond \bigcup_{n \in \omega} a_n = \bigcup_{n \in \omega} \ddiamond a_n$,
  which entails the desired equality
  $\ddiamond(\cl{\bigcup_{n \in \omega} a_n}) = \cl{\ddiamond(\bigcup_{n \in \omega} a_n)}$.
  
  For the converse, assume that $(X, \tau, R)$ satisfies~\eqref{eq:1}.
  By assumption $(X, \tau)$ is a $\sigma$-Stone space and $(B, \Diamond)$
  is a modal algebra, so we know that $B$ is a $\sigma$-Boolean algebra
  and that $\Diamond\bot = \bot$. It remains to show that diamonds distribute
  over countable joins.
  Since $(B, \Diamond) \cong (\clp(\mb{X}), \ddiamond)$ is a modal algebra isomorphism and every such isomorphism is automatically a $\sigma$-modal algebra isomorphism, it suffices to show that the
  dual Boolean algebra of $(X, \tau, R)$ satisfies
  \begin{equation*}
    \cl{\bigcup_{n \in \omega} \ddiamond a_n} = \ddiamond \cl{\bigcup_{n \in \omega} a_n}
  \end{equation*}
  for every sequence $(a_n)_{n \in \omega}$ of elements in $\clp(\mb{X})$.
  In light of~\eqref{eq:1}, it suffices to show
  $\cl{\bigcup_{n \in \omega} \ddiamond a_n} = \cl{\ddiamond \bigcup_{n \in \omega} a_n}$. But this follows from
  the fact that
  $\bigcup_{n \in \omega} \ddiamond a_n = \ddiamond \bigcup_{n \in \omega} a_n$,
  which is an immediate consequence of the definition of $\ddiamond$.
  
  (2) \; 
  Suppose that $(B, \Diamond)$ is a measurable $\sigma$-modal algebra.
  Then using the isomorphism $(B, \Diamond) \cong (\clp(\mb{X}), \ddiamond)$,
  we know that for any countable descending chain
  $a_0 \supseteq a_1 \supseteq a_2 \supseteq \cdots$
  of clopen subsets of $\mb{X}$,
  \begin{equation*}
    \interior\Big(\bigcap_{n \in \omega} a_n\Big) = \emptyset
    \quad\text{implies}\quad
    \interior\Big(\bigcap_{n \in \omega} \ddiamond a_n\Big) = \emptyset.
  \end{equation*}
  Let $S$ be a Baire null-set.
  Then $S \subseteq \bigcup_{n \in \omega} S_n$ for some collection $(S_n)_{n \in \omega}$
  of basic null-sets.
  Since $\ddiamond S \subseteq \ddiamond(\bigcup_{n \in \omega} S_n) = \bigcup_{n \in \omega} \ddiamond S_n$
  and the countable union of Baire null-sets sets is a Baire null-set (by definition),
  it suffices to show that $\ddiamond S_n$ is a Baire null-set for any basic null-set $S_n$.
  
  So let $S = \bigcap_{n \in \omega} a_n$ be a basic null-set, where $(a_n)_{n \in \omega}$
  is a sequence of clopen subsets of $\mb{X}$ such that $\interior(\bigcap_{n \in \omega} a_n) = \emptyset$.
  Defining $a'_n := a_0 \cap \cdots \cap a_n$ yields a countable descending chain
  such that $\bigcap_{n \in \omega} a'_n = \bigcap_{n \in \omega} a_n$,
  hence $\interior(\bigcap_{n \in \omega} a'_n) = \emptyset$.
  This implies
  $\interior\Big( \bigcap_{n \in \omega} \ddiamond a'_n \Big) = \emptyset$.
  If we can prove that
  \begin{equation}\label{eq:3}
    \bigcap_{n \in \omega} \ddiamond a'_n = \ddiamond S
  \end{equation}
  then we are done.
  To this end, let $x \in \ddiamond S$. Then there is some $y$ such that $xRy$ and $y \in S$.
  This implies $y \in a'_n$, hence $x \in \ddiamond a'_n$, for all $n \in \omega$.
  Therefore $x \in \bigcap_{n \in \omega} \ddiamond a'_n$.
  For the converse, suppose $x \in \bigcap_{n \in \omega} \ddiamond a'_n$.
  Then using the fact that the $a'_n$ are decreasing, one can show that
  $\{ R[x] \} \cup \{ a'_n \mid n \in \omega \}$ has the finite
  intersection property.
  So by compactness there exists some $y \in R[x] \cap \bigcap_{n \in \omega} a'_n = R[x] \cap S$,
  so $x \in \ddiamond S$. This proves~\eqref{eq:3}.
  
  Conversely, suppose $\ddiamond S$ is a Baire null-set for any Baire null-set $S$.
  We show that $(B, \Diamond)$ is a measurable $\sigma$-modal algebra by checking (\ref{eq:good}).
  Let $(a_n)_{n \in \omega}$ be a countable descending chain of clopen sets such 
  that $\interior(\bigcap_{n \in \omega} a_n) = \emptyset$.
  Then $\bigcap_{n \in \omega}a_n$ is a basic null-set by definition.
  The assumption now gives that $\ddiamond(\bigcap_{n \in \omega} a_n)$
  is a Baire null-set.
  Using~\eqref{eq:3} we then conclude that $\bigcap_{n \in \omega} \ddiamond a_n$
  is a Baire null-set, as desired.
\end{proof}

\begin{remark}
  It follows immediately from the definitions that every Baire null-set
  in a Stone space is a meager set, but it is unclear whether the converse
  holds. We leave this as an open question.
\end{remark}

%%%%%%%%%%%%%%%%%%%%%%%%%%%%%%%%%%%%%%%%%%%%%%%%%%%%%%%%%%%%%%%%%%%%%%%%%%%%%%%%
\section{Completeness via a modal Loomis-Sikorski representation}\label{sec:comp}

  While Theorem~\ref{thm:JT-restriction} provides a duality for the algebraic
  semantics of both $\MML$ and $\MLS$, it does not yield completeness for
  either with respect to (marked) modal measurable spaces.
  This is because countable meets and joins in (measurable) $\sigma$-modal spaces
  need not be given by intersections and unions.
  In order to obtain a completeness result for $\MLS$ with respect to
  mmm-spaces, we prove a modal analogue of the Loomis-Sikorski theorem
  which states that every measurable modal algebra can be obtained as the
  quotient of a modal measurable space by a modal $\sigma$-ideal.

\begin{definition}
  Let $(X, R)$ be a Kripke frame.
  A \emph{modal $\sigma$-field} on $(X, R)$ is a collection
  $\mc{F} \subseteq \mc{P}X$ that contains $X$ and is closed under
  countable unions, complements and $\ddiamond$
  (hence also contains $\emptyset$ and is closed under countable intersections).
\end{definition}

  We note that a modal measurable space (Definition~\ref{def:mod-meas-space}) can be viewed as a triple $(X, R, \Sigma)$ consisting of a Kripke frame $(X, R)$ together with a modal $\sigma$-field $\Sigma$ on $(X, R)$.

  The arbitrary intersection of modal $\sigma$-fields on a Kripke frame
  is again a modal $\sigma$-field. Therefore we can define the modal
  $\sigma$-field on $(X, R)$ generated by some collection $C$ of subsets of
  $X$ as the intersection of all modal $\sigma$-fields containing $C$.

\begin{definition}
  Let $\mb{X} = (X, \tau, R)$ be a $\sigma$-modal space.
  Then we write $\Baire(\mb{X})$ for the modal $\sigma$-field on $(X, R)$
  generated by the clopen subsets of $(X, \tau)$.
  We call sets in $\Baire(\mb{X})$ \emph{modal Baire sets}.
\end{definition}

  In analogy with the non-modal case (see e.g.~\cite[Chapter~40]{GivHal08})
  we find that the modal Baire null-sets, i.e.~sets of a
  Stone space that are both modal Baire sets and Baire null-sets,
  form a modal $\sigma$-ideal.
  This only works for measurable $\sigma$-modal spaces, because we need
  the fact that $\ddiamond S$ is a Baire null-set whenever $S$ is a Baire null-set.
  As a consequence, the modal analogue of the Loomis-Sikorski theorem
  (Theorem~\ref{thm:modal-LS}) only applies to measurable $\sigma$-modal algebras.

\begin{lemma}\label{lemma-immediate}
  Let $\mb{X} = (X, \tau, R)$ be a measurable $\sigma$-modal space.
  Then the collection $\mc{M}$ of modal Baire null-sets of $(X, \tau)$ forms a
  modal $\sigma$-ideal of $\Baire(\mb{X})$.
\end{lemma}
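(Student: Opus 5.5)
We have to check three things about $\mc{M} = \Baire(\mb{X}) \cap \{\text{Baire null-sets}\}$: that it is a $\sigma$-ideal of the $\sigma$-field $\Baire(\mb{X})$, and that it is closed under $\ddiamond$. The plan is to obtain each property by intersecting two closure properties. One comes from $\Baire(\mb{X})$ being a modal $\sigma$-field. The other comes from the Baire null-sets forming a $\sigma$-ideal of $\mc{P}X$ that is closed under $\ddiamond$, using that $\mb{X}$ is measurable.

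First, nonemptiness. The empty set is a modal Baire set, since $\Baire(\mb{X})$ is a field. It is also a basic null-set, taking $a_n = \emptyset$ for all $n$. So $\emptyset \in \mc{M}$.

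Second, downward closure within $\Baire(\mb{X})$. Let $T \in \Baire(\mb{X})$ with $T \subseteq S \in \mc{M}$. Then $T$ is covered by the same countably many basic null-sets that cover $S$, so $T$ is a Baire null-set. Hence $T \in \mc{M}$.

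Third, closure under countable unions. Let $S_k \in \mc{M}$ for $k \in \omega$. Then $\bigcup_k S_k \in \Baire(\mb{X})$, because a modal $\sigma$-field is closed under countable unions. Each $S_k$ is covered by countably many basic null-sets $S_{k,m}$. The doubly indexed family $(S_{k,m})_{k,m \in \omega}$ is still countable and covers $\bigcup_k S_k$, so the union is a Baire null-set.

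Finally, the modal condition. Let $S \in \mc{M}$. Then $\ddiamond S \in \Baire(\mb{X})$, since modal $\sigma$-fields are closed under $\ddiamond$ by definition. Moreover $\ddiamond S$ is a Baire null-set by the definition of a measurable $\sigma$-modal space; this was shown to be the dual of condition \eqref{eq:good} in Theorem~\ref{thm:JT-restriction}. Hence $\ddiamond S \in \mc{M}$.

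None of these steps is a real obstacle. The only delicate point is the modal closure, and it is exactly where measurability of $\mb{X}$ is needed. Without it, $\ddiamond$ of a Baire null-set could fail to be null, because $\ddiamond$ is applied to arbitrary subsets here rather than only to clopens.
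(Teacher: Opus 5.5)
Your proposal is correct and follows the same route as the paper. The $\sigma$-ideal properties come directly from the definition of Baire null-sets as sets covered by countably many basic null-sets, together with the closure properties of the modal $\sigma$-field $\Baire(\mathbb{X})$. Closure under $\ddiamond$ comes from the definition of a measurable $\sigma$-modal space. You just spell out the nonemptiness, downward-closure and countable-union checks that the paper leaves implicit.
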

\begin{proof}
  It follows from the definition of a Baire null-set 
  that $\mc{M}$ is a $\sigma$-ideal,
  and it is closed under $\ddiamond$ because by the definition of a
  measurable $\sigma$-modal space, $\ddiamond S$ is a Baire null-set for any Baire null-set~$S$.
\end{proof}

\begin{lemma}
  Every modal Baire set of a measurable $\sigma$-modal space
  is equivalent to a unique clopen set modulo
  the modal $\sigma$-ideal $\mc{M}$ of modal Baire null-sets.
\end{lemma}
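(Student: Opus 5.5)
Following the classical Loomis--Sikorski argument, I would consider the collection
\[
  \mc{G} := \{ S \in \Baire(\mb{X}) \mid S \symdif a \in \mc{M} \text{ for some clopen } a \}
\]
and show that it is a modal $\sigma$-field on $(X, R)$ containing all clopens. Minimality of $\Baire(\mb{X})$ then forces $\mc{G} = \Baire(\mb{X})$, which gives existence. Uniqueness is handled separately at the end.

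Clopens lie in $\mc{G}$ trivially, taking $a = S$. For complements, I would use $(X \setminus S) \symdif (X \setminus a) = S \symdif a$. For countable unions, suppose $S_n \symdif a_n \in \mc{M}$. I would take $a := \cl{\bigcup_n a_n}$, which is clopen by~\eqref{eq:sStone}. Then
\[
  (\tbigcup_n S_n) \symdif a \subseteq \tbigcup_n (S_n \symdif a_n) \cup \big(\cl{\tbigcup_n a_n} \setminus \tbigcup_n a_n\big).
\]
The last set equals $\bigcap_n (a \setminus a_n)$. This is a countable intersection of clopens with empty interior, since any clopen inside it would be disjoint from every $a_n$ yet contained in the closure of their union. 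So it is a basic null-set, and it is a modal Baire set. Because $\mc{M}$ is a $\sigma$-ideal by Lemma~\ref{lemma-immediate}, and the left-hand side is a modal Baire set, the union lies in $\mc{G}$.

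For closure under $\ddiamond$, suppose $S \symdif a \in \mc{M}$. I would show $\ddiamond S \symdif \ddiamond a \subseteq \ddiamond(S \symdif a)$. This follows pointwise, exactly as in the proof of Lemma~\ref{lem:msa-quotient}: $\ddiamond S \setminus \ddiamond a \subseteq \ddiamond(S \setminus a)$, and symmetrically. Now $\ddiamond(S \symdif a)$ is a Baire null-set because $\mb{X}$ is measurable, and it is a modal Baire set; this is exactly the modal closure from Lemma~\ref{lemma-immediate}. Since $\ddiamond a$ is clopen and $\ddiamond S$ is a modal Baire set, the subset $\ddiamond S \symdif \ddiamond a$ is a modal Baire set inside a member of $\mc{M}$, hence in $\mc{M}$. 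So $\ddiamond S \in \mc{G}$. This step is where the measurability hypothesis is genuinely needed.

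For uniqueness, suppose $S$ is equivalent to clopens $a$ and $b$. Then $a \symdif b$ is a clopen set lying in $\mc{M}$. This is the main obstacle: I need that a nonempty clopen set is never a Baire null-set. That is a Baire category fact for compact Hausdorff spaces. Every basic null-set $\bigcap_n a_n$ is closed with empty interior, hence nowhere dense, so every Baire null-set is meager. Since a nonempty open subset of a compact Hausdorff space is not meager, $a \symdif b = \emptyset$ and therefore $a = b$.
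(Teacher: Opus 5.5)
Your proposal is correct and follows essentially the same route as the paper. You show that the sets equivalent modulo $\mc{M}$ to a clopen form a modal $\sigma$-field containing the clopens, using $\cl{\bigcup_n a_n}$ for countable unions and $\ddiamond S \symdif \ddiamond a \subseteq \ddiamond(S \symdif a)$ plus measurability for $\ddiamond$; uniqueness then comes from Baire null-sets being meager together with the Baire Category Theorem. Restricting $\mc{G}$ to modal Baire sets is harmless extra care, since any $S$ with $S \symdif a \in \mc{M}$ for a clopen $a$ is already in $\Baire(\mb{X})$.
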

\begin{proof}
  Let $\mb{X} = (X, \tau, R)$ be a measurable $\sigma$-modal space and
  $\mc{M}$ the modal $\sigma$-ideal of modal Baire null-sets.
  Let $\mc{F}$ be the class of subsets of $X$ that are equivalent to a
  clopen subset modulo $\mc{M}$. We claim that $\mc{F}$ is a modal $\sigma$-field on $(X, R)$ that 
  includes $\Baire(\mb{X})$.

  To this end, first we note that every clopen subset $a$ of $\mb{X}$
  is in $\mc{F}$ because $a \symdif a = \emptyset \in \mc{M}$.
  To see that $\mc{F}$ is closed under complementation, let $S \in \mc{F}$
  and suppose $a$ is a clopen set such that $S \equiv_{\mc{M}} a$,
  i.e.~$S \symdif a \in \mc{M}$.
  Then
  \begin{equation*}
    (X \setminus S) \symdif (X \setminus a) = S \symdif a \in \mc{M}
  \end{equation*}
  and since $X \setminus a$ is clopen, we get $X \setminus S \in \mc{F}$.

  Next, let $( S_n )_{n \in \omega}$ be a sequence of sets in $\mc{F}$,
  and let $(a_n)_{n \in \omega}$ be a sequence of clopen sets such that
  $S_n \equiv_{\mc{M}} a_n$ for each $n \in \omega$.
  Then we have
  \begin{equation*}
    \Big(\bigcup_{n \in \omega} S_n\Big) \symdif \Big(\bigcup_{n \in \omega} a_n \Big) \subseteq \bigcup_{n \in \omega}(S_n \symdif a_n) \in \mc{M},
  \end{equation*}
  so $\bigcup_{n \in \omega} S_n \equiv_{\mc{M}} \bigcup_{n \in \omega} a_n$.
  Now let $a := \cl{\bigcup_{n \in \omega} a_n}$. Then $a$ is clopen because $\mb{X}$ is a $\sigma$-modal space. Moreover, we have
  \begin{equation*}
    a \symdif \Big(\bigcup_{n \in \omega} a_n\Big)
      = a \setminus \Big(\bigcup_{n \in \omega} a_n\Big)
      = a \cap \bigcap_{n \in \omega} (X \setminus a_n),
  \end{equation*}
  which is in $\mc{M}$ because
  \begin{equation*}
    \interior{\Big(a \cap \bigcap_{n \in \omega} (X \setminus a_n)\Big)}
      = \interior{(a)} \cap \interior{\Big(\bigcap_{n \in \omega} X \setminus a_n\Big)}
      = \interior{(a)} \cap X \setminus \Big(\cl{\bigcup_{n \in \omega} a_n}\Big)
      = \interior{(a)} \cap (X \setminus a)
      = \emptyset.
  \end{equation*}
  So we have $\bigcup_{n \in \omega} S_n \equiv_{\mc{M}} \bigcup_{n \in \omega} a_n \equiv_{\mc{M}} a$.
  Hence $\bigcup_{n \in \omega} S_n \in \mc{F}$.
  
  Finally, if $S \in \mc{F}$ and $S \equiv_{\mc{M}} a$, where $a$
  is clopen, then Lemma~\ref{lem:msa-quotient} entails $\ddiamond S \equiv_{\mc{M}} \ddiamond a$.
  Since $\ddiamond a$ is clopen, it follows that $\ddiamond S \in \mc{F}$.
  Therefore $\mc{F}$ is a modal $\sigma$-field on $(X, R)$
  that contains the clopen subsets, hence $\Baire(\mb{X}) \subseteq \mc{F}$.
  This shows that every modal Baire set is equivalent modulo $\mc{M}$
  to a clopen subset.

  For uniqueness, suppose that the modal Baire set $S$ is equivalent
  modulo $\mc{M}$ to each of the clopen sets $a$ and $b$.
  Then $a \equiv_{\mc{M}} b$, so $a \symdif b \in \mc{M}$.
  Then $a \symdif b$ is open because $a$ and $b$ are clopen,
  and since every Baire null-set is in particular meager
  we can use~\cite[Theorem~28]{GivHal08}
  (which is a corollary of the Baire Category Theorem)
  to conclude $a \symdif b = \emptyset$.
  Therefore $a = b$.
\end{proof}

  We can now prove a modal analogue of the Loomis-Sikorski
  theorem~\cite[Chapter~40, Corollary~1]{GivHal08}.

\begin{theorem}\label{thm:modal-LS}
  Every measurable $\sigma$-modal algebra is isomorphic to a modal
  $\sigma$-field modulo a modal $\sigma$-ideal.
\end{theorem}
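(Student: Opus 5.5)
Given a measurable $\sigma$-modal algebra $(B, \Diamond)$, the plan is to use its J\'onsson-Tarski dual $\mb{X} = (X, \tau, R) = \uf(B, \Diamond)$. By Theorem~\ref{thm:JT-restriction}(2), $\mb{X}$ is a measurable $\sigma$-modal space. We then form the modal measurable space $(X, R, \Baire(\mb{X}))$. By Lemma~\ref{lemma-immediate}, the modal Baire null-sets $\mc{M}$ form a modal $\sigma$-ideal of $(\Baire(\mb{X}), \ddiamond)$. Hence Proposition~\ref{prop:msa-quotient} makes $(\Baire(\mb{X}), \ddiamond)/\mc{M}$ a $\sigma$-modal algebra, with joins computed as $[\bigcup S_n]$ and $\Diamond^{\mc{M}}[S] = [\ddiamond S]$. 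The candidate isomorphism is
\begin{equation*}
  \Phi : B \to \Baire(\mb{X})/\mc{M}, \qquad \Phi(b) := [\theta_B(b)],
\end{equation*}
where $\theta_B$ is the J\'onsson-Tarski isomorphism onto $\clp(X,\tau)$.

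Bijectivity comes from the preceding lemma, which says every modal Baire set is $\mc{M}$-equivalent to a unique clopen set. Existence of that clopen set gives surjectivity of $a \mapsto [a]$ from $\clp(X, \tau)$ to the quotient. Uniqueness gives injectivity, since $[a] = [b]$ means both clopens represent the same class. Since $\theta_B$ is a bijection, $\Phi$ is a bijection.

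Next we check that $\Phi$ preserves the structure. Complements are immediate, since $\theta_B(\neg b) = X \setminus \theta_B(b)$. For countable joins, $\theta_B(\bigvee b_n) = \cl{\bigcup \theta_B(b_n)}$ by~\eqref{structure-clopens}. The computation in the proof of the preceding lemma shows that $\cl{\bigcup a_n} \setminus \bigcup a_n$ is a basic null-set lying in $\Baire(\mb{X})$, so it is in $\mc{M}$. Hence $[\cl{\bigcup a_n}] = [\bigcup a_n] = \bigvee^{\mc{M}} [a_n]$. For the diamond, $\theta_B(\Diamond b) = \ddiamond \theta_B(b)$ holds exactly by J\'onsson-Tarski duality, so $\Phi(\Diamond b) = [\ddiamond \theta_B(b)] = \Diamond^{\mc{M}}\Phi(b)$. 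A bijective homomorphism of Boolean algebras preserving $\Diamond$ and countable joins is an isomorphism of $\sigma$-modal algebras, which proves the theorem.

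The main obstacle is already absorbed into the earlier lemmas. Two points there need care: closure of $\mc{M}$ under $\ddiamond$, which is where measurability and the (IDC) condition enter, and the Baire-category argument that a nonempty clopen set is not a Baire null-set, which gives uniqueness. In the present proof the only delicate step is the join preservation above. It requires $\cl{\bigcup a_n} \setminus \bigcup a_n$ to be not merely a Baire null-set but a \emph{modal Baire} set. This holds because it is the difference of a clopen set and a countable union of clopens.
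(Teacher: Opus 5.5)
Your proof is correct and is essentially the paper's argument run in the inverse direction. The paper defines the surjective $\sigma$-modal homomorphism $f:\Baire(\mb{X})\to\clp(X,\tau)$ sending each modal Baire set to its unique clopen representative, and your $\Phi(b)=[\theta_B(b)]$ is just $f^{-1}$ composed with $\theta_B$. Both arguments rest on the same uniqueness lemma and the same computation that $\cl{\bigcup a_n}\setminus\bigcup a_n\in\mc{M}$. Your version is slightly more careful: you explicitly invoke Theorem~\ref{thm:JT-restriction}(2) to obtain a measurable dual space, and you avoid the paper's implicit kernel/first-isomorphism step.
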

\begin{proof}
  Let $(B, \Diamond)$ be a $\sigma$-modal algebra,
  $\mb{X} = (X, \tau, R)$ its dual $\sigma$-modal space,
  and $\Baire(\mb{X})$ the modal $\sigma$-field of modal Baire sets of $\mb{X}$.
  Let $f$ be the function that takes each $S \in \Baire(\mb{X})$ to the unique
  clopen set $a$ to which $S$ is equivalent.
  Then $f$ is a surjective function. 
  
  We claim that $f$ is a $\sigma$-homomorphism. Clearly $f(X) = X$ and $f(\emptyset) = \emptyset$.
  If $S \in \Baire(\mb{X})$ and $S \equiv_{\mc{M}} a$
  then we know $(X \setminus S) \equiv_{\mc{M}} (X \setminus a)$.
  This implies $f(X \setminus S) = X \setminus a = X \setminus f(S)$,
  so $f$ preserves negations. Similarly, if $(S_n)_{n \in \omega}$
  is a countable sequence of modal Baire sets and $f(S_n) = a_n$ for
  each $n \in \omega$, then we know that
  \begin{equation*}
    f\Big(\bigcup_{n \in \omega} S_n\Big)
      = \cl{\bigcup_{n \in \omega} a_n}
      = \bigvee_{n \in \omega} a_n
      = \bigvee_{n \in \omega} f(S_n).
  \end{equation*}
  Finally, we check that $f$ commutes with $\ddiamond$.
Let $\ddiamond S \in \Baire(\mb{X})$, and let $a$ be the unique clopen set such that $S \equiv_{\mc{M}} a$.
  Then $f(S) = a$.
  Moreover, by Lemma~\ref{lemma-immediate}, 
 we have $f(\ddiamond S) = \ddiamond a$ as well: $S\equiv_{\mc{M}} a$ implies that $\ddiamond S\equiv_{\mc{M}} \ddiamond a$, and this just means that $f(\ddiamond S) = \ddiamond a$.
  So
    $f(\ddiamond S) = \ddiamond a = \ddiamond f(S)$.
All in all, $f$ is a surjective modal $\sigma$-homomorphism.  So
 $(B, \Diamond) \cong \Baire(\mb{X})/\mc{M}$.
\end{proof}

  This sets us up to prove completeness of $\MLS \oplus \Ax$ with respect to
  the class of mmm-spaces validating all axioms in $\Ax$.

\begin{theorem}\label{LS-compl}
  The logic $\MLS \oplus \Ax$ is sound and complete with
  respect to the class of mmm-spaces validating $\Ax$.
  That is, is for every formula $\varphi$ we have 
  \begin{equation*}
    \MLS \oplus \Ax \vdash \phi \iff  \cat{MMM}(\Ax) \Vdash \phi.
  \end{equation*}
\end{theorem}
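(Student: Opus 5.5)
Soundness is exactly Theorem~\ref{thm:mls-sound}, so the work is completeness. I will argue contrapositively. Suppose $\MLS \oplus \Ax \nvdash \phi$. By Theorem~\ref{thm:alg-comp}\eqref{it:alg-comp-mls}, the Lindenbaum--Tarski algebra $(B,\Diamond)$ of $\MLS \oplus \Ax$ is a measurable $\sigma$-modal algebra that validates $\Ax$. Moreover, some assignment $v$ on it gives $\phi$ a value different from $\top$.

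The plan is to turn $(B,\Diamond)$ into an mmm-space, in four steps.
\begin{enumerate}
  \item Let $\mb{X}=(X,\tau,R)$ be the dual space. By Theorem~\ref{thm:JT-restriction} it is a measurable $\sigma$-modal space.
  \item Take $\Sigma := \Baire(\mb{X})$ and $N := \mc{M}$, the modal Baire null-sets. Then $(X,R,\Sigma)$ is a modal measurable space, because $\Baire(\mb{X})$ is a modal $\sigma$-field. By Lemma~\ref{lemma-immediate}, $\mc{M}$ is a modal $\sigma$-ideal on it.
  \item By Theorem~\ref{thm:modal-LS}, the map $f$ induces an isomorphism of $\sigma$-modal algebras $\mo{F}^\dagger = (\Sigma,\ddiamond)/\mc{M} \cong (\clp(\mb{X}),\ddiamond) \cong (B,\Diamond)$.
  \item Transport validity along this isomorphism. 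Given any valuation $V:\Prop\to\Sigma$, compose with the isomorphism to get an assignment on $B$. By induction on formulas, $\mts{\psi}{M}$ corresponds to the algebraic value of $\psi$; this uses that the quotient operations are $[\bigcup]$, $[X\setminus\cdot]$ and $[\ddiamond\,\cdot]$. Hence every axiom in $\Ax$ is valid on the space. Conversely, for the refuting assignment $v$, pick for each $p$ the clopen $\theta_B(v(p))$ as $V(p)$. Then $\mts{\phi}{M}\neq\top$, so $\phi$ is refuted on an mmm-space validating $\Ax$.
\end{enumerate}

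The main obstacle is checking that $(X,R,\Sigma,\mc{M})$ satisfies the extra mmm-condition: $\bigcap_n a_n\in\mc{M}$ should imply $\bigcap_n\ddiamond a_n\in\mc{M}$. For arbitrary sequences this looks too strong. Take two disjoint sets $a_0,a_1$ with $\ddiamond a_0\cap\ddiamond a_1$ non-null: the hypothesis holds but the conclusion fails. This also means the soundness argument only ever uses the condition for descending chains. So I would read the condition, as in \eqref{eq:good}, as restricted to descending chains, and verify it in that form.

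For a descending chain $(a_n)$ of modal Baire sets with $\bigcap_n a_n\in\mc{M}$, the argument runs as follows.
\begin{itemize}
  \item Let $c_n := f(a_n)$ be the clopen representatives. Since $f$ is a homomorphism, the $c_n$ are descending as well.
  \item We have $\bigcap_n a_n \symdif \bigcap_n c_n \subseteq \bigcup_n (a_n\symdif c_n)\in\mc{M}$, so $\bigcap_n c_n$ is a Baire null-set.
  \item Every Baire null-set is meager. By the Baire category argument cited from \cite[Theorem~28]{GivHal08}, $\bigcap_n c_n$ therefore has empty interior.
  \item Measurability of $(B,\Diamond)$, transported to clopens, gives $\interior(\bigcap_n\ddiamond c_n)=\emptyset$. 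So $\bigcap_n\ddiamond c_n$ is a basic null-set, and it lies in $\Sigma$.
  \item By Lemma~\ref{lem:msa-quotient}, $\ddiamond a_n\equiv_{\mc{M}}\ddiamond c_n$ for each $n$. Hence $\bigcap_n\ddiamond a_n$ differs from $\bigcap_n\ddiamond c_n$ by a subset of $\bigcup_n(\ddiamond a_n\symdif\ddiamond c_n)\in\mc{M}$.
  \item It follows that $\bigcap_n \ddiamond a_n\in\mc{M}$.
\end{itemize}
With this verified, the space lies in $\cat{MMM}(\Ax)$ and refutes $\phi$, which completes the proof.
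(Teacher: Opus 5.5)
Your proof is correct and follows the paper's route: algebraic completeness via Theorem~\ref{thm:alg-comp}, the restricted J\'onsson--Tarski duality, and the modal Loomis--Sikorski quotient $(\Baire(\mb{X}),\ddiamond)/\mc{M}$. It is more careful than the paper's proof, which asserts that $(X,R,\Sigma,\mc{M})$ is an mmm-space without checking the extra condition on $N$.

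Your objection to the literal condition is right, and in fact decisive. Applied to the sequence $a, X\setminus a, X, X,\ldots$, it puts $\ddiamond a\cap\ddiamond(X\setminus a)$ in $N$. So $\Diamond p\to\Box p$ would be valid on every mmm-space. But it fails on the complex algebra of a point with two successors, which is finite and hence measurable. So with the definition as written, completeness already fails for $\Ax=\emptyset$, and your descending-chain reading is forced rather than optional.

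Your verification of that reading goes through: descending clopen representatives $c_n=f(a_n)$, Baire category to get $\interior(\bigcap_n c_n)=\emptyset$, then \eqref{eq:good} and Lemma~\ref{lem:msa-quotient}.

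The one thing to add concerns soundness. Once you weaken the definition, soundness is no longer literally Theorem~\ref{thm:mls-sound}. A descending chain $[a_0]\geq[a_1]\geq\cdots$ in $\Sigma/N$ need not have descending representatives. So first replace $a_n$ by $a_0\cap\cdots\cap a_n$, which lies in the same class, and only then apply the condition.
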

\begin{proof}
  We proved soundness in Theorem~\ref{thm:mls-sound}.
  For completeness, suppose $\phi$ is a formula such that
  $\MLS \oplus \Ax \not\vdash \phi$. Then by Theorem~\ref{thm:alg-comp}
  there exists a  measurable $\sigma$-modal algebra $(B, \Diamond)$ such that
  $(B, \Diamond) \Vdash \Ax$ and $(B, \Diamond) \not\Vdash \varphi$.
  By the proof of the Modal Loomis-Sikorski Theorem~\ref{thm:modal-LS}
  there exists a modal measurable space $(X, R, \Sigma)$
  and a modal $\sigma$-ideal $\mc{M}$ that is closed under $\ddiamond$
  such that $(\Sigma, \ddiamond)/\mc{M}$ is isomorphic to  $(B, \Diamond)$.
  This implies that $\mf{F} = (X, R, \Sigma, \mc{M})$ is an mmm-space
  such that $\mf{F} \not\Vdash \phi$, as desired. 
\end{proof}

%%%%%%%%%%%%%%%%%%%%%%%%%%%%%%%%%%%%%%%%%%%%%%%%%%%%%%%%%%%%%%%%%%%%%%%%%%%%%%%%
\section{Conclusion}

In this paper we introduced a new semantics and logic for reasoning about relational structures based on measurable spaces.
Our main method is a modal version of the Loomis–Sikorski theorem, which yields a completeness result for these infinitary logics with respect to the new semantics.

We view this work as a first step in developing a modal logical framework for reasoning about measure-based dynamical systems and (point-free) ergodic theory. This naturally gives rise to many avenues for future research, the first of which was already mentioned in Problem~\ref{problem} above.

Second, as mentioned in the introduction, our framework considers the standard Kripke semantics but based on measurable spaces. A natural next step would be to make the relations probabilistic and, more generally, to incorporate probability explicitly into the framework.
Orthogonal to this, it would be interesting to investigate whether the logic and semantics benefit a coalgebraic perspective.

Finally, the framework used in this paper could be generalised to other algebraic structures arising in classical logic, such as distributive lattices and (modal) Heyting algebras. This leads to the question of whether there are Loomis–Sikorski type theorems for distributive lattices, (modal) Heyting algebras, and what the corresponding logics, semantics, and completeness results would be.
Ultimately, we would like to develop constructive measure theory based on $\sigma$-Heyting algebras, with an appropriate analogue of the Loomis-Sikorski theorem.

\bigskip\noindent
{\bf Acknowledgments} The authors would like to thank the anonymous referees for their careful reading and many helpful comments. They are also grateful to Rodrigo Almeida for useful discussions on the completeness of infinitary logics.

%%%%%%%%%%%%%%%%%%%%%%%%%%%%%%%%%%%%%%%%%%%%%%%%%%%%%%%%%%%%%%%%%%%%%%%%%%%%%%%%
\bibliographystyle{eptcs}
\bibliography{biblio}

\end{document}